\newcommand{\R}{{\mathbb R}}
\def\BbbP{{{\rm I} \!  {\rm P}}}
\def\BbbE{{{\rm I}  \! {\rm E}}}
\def \P{\BbbP}
\def \E{\BbbE}
\newcommand{\cal}{\mathcal}
\theoremstyle{plain}% default
\newtheorem{theorem}{Theorem}%[section]
\newtheorem{lemma}{Lemma}[section]
\newtheorem{corollary}[lemma]{Corollary}%[section]
\newtheorem{proposition}[lemma]{Proposition}%[section]
\theoremstyle{definition}
\newtheorem{remark}{Remark}%[section]
\begin{document}
\bibliographystyle{plain}
\title[Change point testing for a periodic Mean Reversion Process]{Change point 
testing for the drift parameters of a periodic Mean Reversion Process}
\author[H. Dehling]{Herold Dehling}
\author[B. Franke]{Brice Franke}
\author[T. Kott]{Thomas Kott}
\author[R. Kulperger]{Reg Kulperger}
%\date{August 2010}
\today
\address{Fakult\"at f\"ur Mathematik, Ruhr-Universit\"at Bochum,
44780 Bochum, Germany}
\email{herold.dehling@ruhr-uni-bochum.de}
\email{thomas.kott@ruhr-uni-bochum.de}
\address{D\'{e}partement de Math\'{e}matique, UFR Sciences et Techniques,
Universit\'{e} de Bretagne Occidentale, 29200 Brest, France}
\email{brice.franke@univ-brest.fr}
\address{Department of Statistical \&
Actuarial Sciences, University of Western Ontario,
London, N6A 5B7, Canada}
\email{kulperger@uwo.ca}

\keywords{Time-inhomogeneous diffusion process, change point,
generalized likelihood ratio test}

\begin{abstract}
In this paper we investigate  the problem of detecting a change in the drift parameters of a
generalized  Ornstein-Uhlenbeck process which is defined as the solution of
\[
 dX_t=(L(t)-\alpha X_t) dt + \sigma dB_t  %, \quad t\geq 0,
\]
and which is observed in continuous time. We derive an explicit representation 
of the  generalized likelihood ratio test statistic assuming that the mean reversion function 
$L(t)$ is a finite linear combination  of known basis functions. 
In the case of a  periodic mean reversion function, we determine the asymptotic distribution of the 
test statistic under the null hypothesis.
\end{abstract}
\maketitle

\section{Introduction}
The problem of  testing for a change in the parameters of a stochastic process has been an important 
issue in statistical inference for a long time. Initially investigated for i.i.d. data, change point analysis has more recently been extended to time series of dependent data. For a general review of change-point analysis, see e.g. the book by Cs\"org\H{o} and Horvath \cite{cso}.

In the present paper, we investigate the problem of detecting changes in the drift parameters of a diffusion process. Diffusion processes are a popular and widely studied class of models with applications in economics, finance, physics and engineering. Statistical inference for diffusion processes has been investigated by many authors, see e.g. the monographs by Liptser and Shiryaev \cite{lip} and by Kutoyants 
\cite{kuto}. However, change point analysis for diffusion processes has found little attention up to now.
We mention the papers by  Beibel \cite{Beibel}, Lee, Nishiyama and Yoshida \cite{Lee}, Mihalache \cite{Miha} and 
Negri and Nishiyama \cite{Negri}, where tests for the change points in the drift parameters of diffusions are discussed. 
However, all those papers deal with drift which stays constant in time before and after the eventual parameter change.

In our paper, we focus on change-point analysis for a special class of diffusion processes, namely for so-called generalized Ornstein-Uhlenbeck processes. These processes are defined as solutions to the stochastic differential equation
\begin{equation}\label{eq:gen-ou}
 dX_t=(L(t)-\alpha X_t)dt+\sigma dB_t , \quad t\geq 0,
\end{equation}
where $\alpha$ and $\sigma$ are positive constants and where the mean-reversion function $L(t)$ is 
non-random. $(B_t)_{t\geq 0}$ denotes standard Brownian motion and $X_0$ is a square-integrable real-valued random variable that is independent of $(B_t)_{t\geq 0}$. If $L(t)\equiv \mu$ is a constant, we obtain the classical Ornstein-Uhlenbeck process, introduced by Ornstein and Uhlenbeck \cite{ornstein}. 
Ornstein-Uhlenbeck processes are popular models for prices of commodities that exhibit a trend of reversion to a fixed mean level. Generalized Ornstein-Uhlenbeck processes can be used as models for the evolution of
prices with a trend or seasonal component $L(t)$.

Dehling, Franke and Kott \cite{De} have studied the problem of parameter estimation of a generalized Ornstein-Uhlenbeck process if the mean-reversion function $L(t)$ is a linear combination of known basis functions 
$\varphi_1(t),\ldots,\varphi_p(t)$, i.e. when
\begin{equation}\label{eq:l-fct}
 L(t)=\sum_{i=1}^p \mu_i \varphi_{i}(t)
\end{equation}
In this model, the unknown parameter vector is 
 $\theta=(\mu_1,\ldots,\mu_p,\alpha)^t$. We denote the corresponding parameter space by $\Theta$, and observe that
\[
 \Theta=\R^p\times (0,\infty).
\]
As is usual in the statistical inference for the drift of a time-continuously observed diffusion process, the diffusion parameter $\sigma$ is supposed to be known. This can be justified by the fact that the volatility $\sigma$ can be computed by the quadratic variation of the process. H\"opfner and Kutoyants (2010) study parameter estimation in more general diffusion models with unknown periodic signals 
$S(\theta,t), 0\leq t\leq T$, having a discontinuity at some unknown point.

We are interested in testing whether there is a change in the  values of 
the parameters $\mu_1,\ldots,\mu_p$ and $\alpha$, in the time interval $[0,T]$ during which the 
process is observed. In the first step, we will consider this problem assuming that the change-point 
$\tau\in (0,T)$ is known. For the asymptotic analysis, when $T\rightarrow \infty$, we write $\tau=s\, T$,
where $s\in (0,1)$ is known. The generalized Ornstein-Uhlenbeck process with change-point $\tau=s\,T$ is given by
\begin{equation}\label{eq:gen-drift}
 dX_t=(S(\theta,t,X_t)\mathbf{1}_{\{t\leq\tau\}} + 
S(\theta',t,X_t)\mathbf{1}_{\{t>\tau\}})dt+\sigma dB_t , \quad 0\leq t\leq T,
\end{equation}
where 
 \begin{equation}\label{eq:drift}
    S(\theta,t,X_t) = \sum_{i=1}^p \mu_i \varphi_{i}(t) -\alpha X_t,
\end{equation}
and where $\mathbf{1}_{A}$ denotes the indicator function of the set $A$. 
The test problem of interest can be formulated as
\begin{equation}\label{eq:test-hypo}
  H_0: \textrm{ }\theta=\theta' \textrm{ (no change) } \quad\textrm{vs.}\quad 
H_A:\textrm{ } \theta\neq\theta'  \textrm{ (change at time point $\tau$) }.
\end{equation}

We want to study the  generalized likelihood ratio test for this test problem. 
We denote by $P_X$ the measure induced by the observable realizations 
$X^T=\{X_t,0\leq t \leq T\}$  on the measurable space $\left(C[0,T],\mathcal{B}[0,T]\right)$, 
$C[0,T]$ being  the space of continuous, real-valued functions on $[0,T]$ and 
$\mathcal{B}[0,T]$ the associated Borel $\sigma$-field. Moreover, let $P_B$ be the 
measure generated by the Brownian motion on $\left(C[0,T],\mathcal{B}[0,T]\right)$. 
Then the likelihood function $\mathcal{L}$ of observations $X^T$ of the process with 
stochastic differential (\ref{eq:gen-drift}) is defined as the Radon-Nikodym derivative, i.e.
\begin{equation*}
 \mathcal{L}(\theta,\theta^\prime,X^T): =\frac{dP_X}{dP_B}(X^T).
\end{equation*}
The generalized  likelihood ratio  $\mathcal{R}(X^T)$ is given by
\begin{equation}\label{eq:lik-test}
   \mathcal{R}(X^T) = \frac{\sup_{\theta\in \Theta}\,
\mathcal{L}(\theta,\theta,X^T)}{\sup_{\theta,\theta^\prime\in \Theta}\mathcal{L}(\theta,\theta^\prime,X^T)}.
\end{equation}
Note that this likelihood ratio depends on the suspected change point  $\tau=s T$, where $s\in (0,1)$. 
Eventually, we will study the log-transformed likelihood ratio 
\[
 \Lambda_T(s):=-2\log (\mathcal{R}(X^T)).
\]
We will give an explicit expression of the process $(\Lambda_T(s))_{0\leq s\leq 1}$ and study the asymptotic
distribution of this process as $T\rightarrow \infty$.

With the above choice of the log-transformed likelihood ratio test statistic $ \Lambda_T $ we put our focus on the optimization of the power of the change point test. 
Another interesting problem, which appears in sequential testing for change points, is the optimization of the time needed for detection of the change point. This is usually realized through some CUSUM test statistic as presented in Siegmund and Venkatraman 
\cite{SigVen} or in the book of Siegmund \cite{Siegmund}. Further, one should note that sequential testing for changes in drift was studied by Beibel \cite{Beibel}, Lee, Nishiyama and Yoshida \cite{Lee} and in Mihalache \cite{Miha}. 
In this paper our focus does not lie in sequential change point detection and thus we do not discuss CUSUM test in what follows.
The approach that we intend to follow in this paper has been developped by Davis, Huang and Yao \cite{davis} for detecting
change points in autoregressive models. Since our model can be considered as some sort of time-continous version of an 
autoregressive process with a time-periodic mean, it is natural to use the theory presented there as a guideline for our research. 

The outline of the paper is as follows. In Section~2, we will first derive an explicit representation of the 
log-transformed generalized likelihood ratio test statistic $(\Lambda_T(s))_{0\leq s\leq 1}$. We then 
formulate two theorems concerning the asymptotic distributions of sup-norm functionals of $(\Lambda_T(s))_{0\leq s\leq 1}$. It turns out that $\sup_{0\leq s\leq 1} \Lambda_T(s)$ does not have a
non-degenerate limit distribution, as $T\rightarrow \infty$.  In Theorem~1, we will prove convergence of $\sup_{s_1\leq s\leq s_2} \Lambda_T(s)$,
when $0<s_1<s_2<1$ are fixed constants. In Theorem~2, we will show that there exist centering and norming sequences $a_T$ and $b_T$ such that 
$(\sup_{0\leq s\leq 1} \Lambda_T(s) - b_T)/a_T$ converges towards an extreme value distribution. The proofs of these theorems are given in Section~3 and Section~4, respectively.

%%%%%%%%%%%%%%%%%%
% SECOND SECTION %
%%%%%%%%%%%%%%%%%%

\section{Generalized likelihood ratio test}\label{sec:gen-lik}

In this section, we will derive an explicit representation of $\Lambda_T(s)$. 
In order to do so, we  need to calculate the maxima in  the numerator and denominator  in (\ref{eq:lik-test}). 
Note that this is achieved by the corresponding maximum likelihood estimators. 
A corollary to Girsanov's theorem, see  Theorem 7.6 in  Lipster and Shiryayev \cite{lip}, gives an explicit expression of the likelihood 
function of a diffusion process provided that
\begin{equation}\label{cond1}
 \P\left(\int_0^T S(\theta,t,X_t)^2dt<\infty\right)=1 %\quad\textrm{for all } 0\leq T <\infty.
\end{equation}
for all $0\leq T <\infty$ and all $\theta$.

\begin{lemma}\label{pr:ml-es}
Let $\mathcal{L}(\theta,X^T)$ denote the likelihood function of the
observations $X^T=\{X_t,0\leq t \leq T\}$ of the generalized Ornstein-Uhlenbeck process $(X_t)_{t\geq 0}$,
defined in  (\ref{eq:gen-ou}), with mean reversion function (\ref{eq:l-fct}).
If the drift term  (\ref{eq:drift}) satisfies condition (\ref{cond1})   then
\begin{equation*}
    \arg \max_{\theta} \mathcal{L}(\theta,X^T) = \hat{\theta}_{ML}=Q_T^{-1} \tilde{R}_T.
\end{equation*}
Here $Q_T \in \R^{(p+1)\times (p+1)}$ and
$ \tilde{R}_T  \in \R^{p+1}$ are defined as
\begin{eqnarray*}
 Q_T &=& \left(
  \begin{array}{cc}
  G_T & -a_T \\
  -a^t_T & b_T
  \end{array}
  \right),
\label{eq:q-matrix}\\
 \tilde{R}_T&=&\left(
  \int_0^T \varphi_1(t) d X_t, \ldots, \int_0^T \varphi_p(t) d X_t,
 - \int_0^T X_t\, d X_t
  \right)^t,
\label{eq:p-vektor}
\end{eqnarray*}
where $
 G_T =(\int_0^T \varphi_j(t)
 \varphi_k(t)  dt )_{1\leq j,k \leq p} \in
\R^{p\times p}$,
$a_T=(\int_0^T \varphi_1( t)X_{t}\, dt,
\ldots, \int_0^T \varphi_p(t)X_{t}\, dt )^t$ and
$b_T=\int_0^T X_{t}^2 dt$.

\end{lemma}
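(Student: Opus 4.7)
The plan is to apply Girsanov's theorem to write the likelihood in closed form and then solve the first-order optimality conditions, which will turn out to be a linear system because the drift is linear in $\theta$.

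First, I would invoke the corollary to Girsanov's theorem cited from Liptser and Shiryaev. Under condition (\ref{cond1}), this gives the explicit formula
\[
\log \mathcal{L}(\theta,X^T) = \frac{1}{\sigma^2}\int_0^T S(\theta,t,X_t)\,dX_t - \frac{1}{2\sigma^2}\int_0^T S(\theta,t,X_t)^2\,dt.
\]
The key observation is that $S(\theta,t,X_t)=\sum_{i=1}^p \mu_i\varphi_i(t)-\alpha X_t$ is \emph{linear} in the parameter vector $\theta=(\mu_1,\dots,\mu_p,\alpha)^t$. Introducing the regressor vector $\psi(t,X_t)=(\varphi_1(t),\dots,\varphi_p(t),-X_t)^t\in\R^{p+1}$, we can write $S(\theta,t,X_t)=\psi(t,X_t)^t\theta$.

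Substituting this into the log-likelihood turns it into a quadratic form in $\theta$:
\[
\log \mathcal{L}(\theta,X^T) = \frac{1}{\sigma^2}\,\theta^t\!\int_0^T\!\psi(t,X_t)\,dX_t - \frac{1}{2\sigma^2}\,\theta^t\!\left(\int_0^T\!\psi(t,X_t)\psi(t,X_t)^t\,dt\right)\theta.
\]
Taking the gradient in $\theta$ and setting it equal to zero yields the normal equations
\[
\left(\int_0^T\psi(t,X_t)\psi(t,X_t)^t\,dt\right)\theta = \int_0^T\psi(t,X_t)\,dX_t.
\]

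Next I would verify, by computing the outer product $\psi(t,X_t)\psi(t,X_t)^t$ entry by entry and integrating, that the matrix on the left coincides with $Q_T$ as stated: the upper-left $p\times p$ block yields $G_T$, the off-diagonal blocks give $-a_T$ and $-a_T^t$ (the minus sign coming from the $-X_t$ coordinate of $\psi$), and the bottom-right entry is $b_T=\int_0^T X_t^2\,dt$. Similarly, the right-hand side is exactly $\tilde R_T$, where the final coordinate again picks up the minus sign. Since the Hessian of $\log\mathcal{L}(\theta,X^T)$ equals $-Q_T/\sigma^2$ and $Q_T$ is positive semidefinite as a Gram-type matrix (the integral of an outer product of a vector with itself), the critical point is a global maximum; whenever $Q_T$ is invertible it is the unique maximizer, giving $\hat\theta_{ML}=Q_T^{-1}\tilde R_T$.

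There is no real obstacle beyond careful bookkeeping: the only subtle point is tracking the sign arising from the $-\alpha X_t$ term when identifying the block structure of $Q_T$, and implicitly assuming invertibility of $Q_T$ (which is natural under non-degeneracy of the regressors $\varphi_1,\dots,\varphi_p,X_t$ on $[0,T]$). The stochastic integrals $\int_0^T\varphi_i(t)\,dX_t$ and $\int_0^T X_t\,dX_t$ appearing in $\tilde R_T$ are well defined as Itô integrals, the latter being equivalent by Itô's formula to $\tfrac{1}{2}(X_T^2-X_0^2-\sigma^2 T)$, though this simplification is not needed for the statement of the lemma.
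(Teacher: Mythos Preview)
Your proposal is correct and follows essentially the same route as the paper: apply the Girsanov-type likelihood formula, differentiate the log-likelihood, and solve the resulting linear system $Q_T\theta=\tilde R_T$. Your use of the regressor vector $\psi(t,X_t)$ is a slightly cleaner bookkeeping device than the paper's coordinatewise differentiation, and you additionally note the second-order condition (the Hessian $-Q_T/\sigma^2$) and the invertibility caveat, which the paper defers to a separate remark; but the argument is the same.
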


\begin{remark}
The expression for the likelihood estimator $ \hat{\theta}_{ML} $ relies on the invertibility of the matrix $ Q_T $.
It was proved in Dehling, Franke and Kott (\cite{De}, p.~180) that as $ T $ becomes large the matrix $ Q_T $ becomes invertible almost surely.
\end{remark}

\begin{remark}
Note that the integrals in $\tilde{R}_T$ can be rewritten as
\begin{equation*}
  \int_0^T\varphi_i(t) d X_t=\int_0^T \varphi_i(t)(L(t)-\alpha X_t)\,dt 
+ \sigma\int_0^T\varphi_i(t)dB_t
\end{equation*}
where the latter is a well-defined It\^{o} integral.
\end{remark}

\begin{proof}
The likelihood function $\mathcal{L}$ of a general diffusion process
\begin{equation*}
  dX_t = S(\theta,t,X_t)dt + \sigma dB_t, \quad 0\leq t \leq T,
\end{equation*}
is  given by
\begin{equation}\label{eq:lik-fct}
\mathcal{L}(\theta,X^T) =\frac{dP_X}{dP_B}(X^T) 
= \exp\left( \frac{1}{\sigma^2}\int_0^T S(\theta,t,X_t)dX_t -
\frac{1}{2\sigma^2}\int_0^T S(\theta,t,X_t)^2dt\right)
\end{equation}
if condition (\ref{cond1}) is fulfilled; see Theorem~7.6 in Lipster and Shiryayev \cite{lip}.
The maximum likelihood estimator is  defined as the maximum of the functional 
$\theta \mapsto \mathcal{L}(\theta,X^T)$
and  the partial derivatives of the logarithm of this functional are
\begin{equation}\label{eq:der-ze}
   \frac{\partial}{\partial\theta_i}\ln({\mathcal{L}}(\theta,X^T))
= \frac{1}{\sigma^2}\int_0^T \frac{\partial}{\partial\theta_i} S(\theta,t,X^T)dX_t
-  \frac{1}{\sigma^2}\int_0^T S(\theta,t,X_t)\frac{\partial}{\partial\theta_i}
S(\theta,t,X_t)dt.
\end{equation}
The derivatives of the drift function specified in (\ref{eq:drift}) can be computed to be
%\begin{equation*}
%   S(\theta,t,X_t) = \sum_{i=1}^p \mu_i \varphi_{i}(t) - \alpha X_t
%\end{equation*}
%such that
\begin{equation*}
    \frac{\partial}{\partial\theta_i} S(\theta,t,X_t) = \left\{
       \begin{array}{ll}
         \varphi_{i}(t), & \hbox{$i=1,\ldots,p$;} \\
         -X_t , & \hbox{$i=p+1$.}
       \end{array}
     \right.
\end{equation*}
Setting the partial derivatives of the log-likelihood function in (\ref{eq:der-ze})
equal zero gives  a system of linear equations which yields the assertion. 
\end{proof}

Due to the linearity of the drift term,
the  log-likelihood function of the process (\ref{eq:gen-drift}) is given by
\begin{eqnarray*}
  \ln \left(\mathcal{L}(\theta,\theta',X^T)\right) 
&=&  \frac{1}{\sigma^2}\left(\int_0^{\tau} S(\theta,t,X_t)dX_t 
         + \int_{\tau}^T S(\theta',t,X_t)dX_t\right) \\
   & &  -
\frac{1}{2\sigma^2} \left(\int_0^{\tau} S(\theta,t,X_t)^2dt 
         +\int_{\tau}^T S(\theta',t,X_t)^2dt \right).
\end{eqnarray*}
Hence, defining $X^{\tau,T}=\{X_t,\tau\leq t\leq T\}$, we can write the generalized likelihood ratio  (\ref{eq:lik-test})  as
\begin{equation}\label{eq:lfct}
    \mathcal{R}(X^{T}) = 
 \frac{\sup_{\theta}\mathcal{L}(\theta,X^{T})}{\sup_{\theta^{\ast}} 
   \mathcal{L}(\theta^{\ast},X^{\tau})
    \sup_{\theta'}\mathcal{L}(\theta',X^{\tau,T})}
\end{equation}
where $\mathcal{L}(\theta,X^{T})$ is given in (\ref{eq:lik-fct}) with drift function 
  specified in (\ref{eq:drift}).
%\begin{equation*}
% L(\theta,X^{T}) =  \exp\left( \frac{1}{\sigma^2}\int_0^T S(\theta,t,X_t)dX_t -
%\frac{1}{2\sigma^2}\int_0^T S(\theta,t,X_t)^2dt\right).
%\end{equation*}
The  terms $\mathcal{L}(\theta^{\ast},X^{\tau})$ and $\mathcal{L}(\theta',X^{\tau,T})$ 
are defined analogously as integrals  with integration regions 0 to $\tau$ and $\tau$ to $T$, respectively.  
It follows from Lemma \ref{pr:ml-es} that
\begin{equation}\label{eq:lh-est}
    \mathcal{R}(X^{T})=\frac{\mathcal{L}(\hat{\theta}_{ML}^{T},X^{T})}
{\mathcal{L}(\hat{\theta}_{ML}^{\tau},X^{\tau}) \mathcal{L}(\hat{\theta}_{ML}^{\tau,T},X^{\tau,T})}
\end{equation}
where the maximum likelihood estimates $\hat{\theta}_{ML}^{T}$, $\hat{\theta}_{ML}^{\tau}$ 
and $\hat{\theta}_{ML}^{\tau,T}$ are computed from the total, the pre- and post-change sample, 
respectively.
This representation of the likelihood ratio is used to prove the following proposition.

\begin{proposition}\label{pr:glr}
The log-transformed generalized likelihood ratio test statistic $\Lambda_T(s)=-2\ln(\mathcal{R}(X^T))$
of the test problem (\ref{eq:test-hypo}) can be represented under the null hypothesis as
\begin{equation*}\label{ratio}
    \Lambda_T(s)= - R_{T}^t Q_{T}^{-1}R_{T} + R_{\tau}^t Q_{\tau}^{-1}R_{\tau} 
+ (R_{T} -R_{\tau})^t (Q_{T}-Q_{\tau})^{-1}(R_{T} -R_{\tau})
\end{equation*}
where $Q_{T}$ is given in Lemma  \ref{pr:ml-es}  and
\begin{equation*}
  R_{T} = \left(  \int_0^{T} \varphi_1(t)dB_{t},
    \dots , \int_0^{T} \varphi_p( t)dB_{ t}   , -\int_0^{T} X_{ t}dB_{ t} \right)^t.
\end{equation*}
\end{proposition}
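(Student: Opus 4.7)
The plan is to express each log-likelihood factor as an explicit quadratic form, plug the three maximum-likelihood estimators from Lemma~\ref{pr:ml-es} into that quadratic form, and finally translate from $\tilde{R}$ (the $dX_t$-integrals) to $R$ (the $dB_t$-integrals) using the $H_0$ dynamics.

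First, observe that with $\psi(t,X_t):=(\varphi_1(t),\ldots,\varphi_p(t),-X_t)^t$ the drift is $S(\theta,t,X_t)=\theta^t\psi(t,X_t)$, so
\[
 2\sigma^2\ln\mathcal{L}(\theta,X^T)=2\theta^t\tilde{R}_T-\theta^t Q_T\theta,
\]
since $\int_0^T\psi\psi^t\,dt=Q_T$ and $\int_0^T\psi\,dX_t=\tilde{R}_T$. Maximizing in $\theta$ gives $\hat\theta=Q_T^{-1}\tilde{R}_T$ (as in the Lemma) and plugging back yields the clean formula $2\sigma^2\ln\mathcal{L}(\hat\theta^T_{ML},X^T)=\tilde{R}_T^t Q_T^{-1}\tilde{R}_T$. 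The same computation applied on $[0,\tau]$ and $[\tau,T]$ gives the analogous identities with $(Q_\tau,\tilde{R}_\tau)$ and $(Q_T-Q_\tau,\,\tilde{R}_T-\tilde{R}_\tau)$ — here we use the additivity of the defining integrals over disjoint time intervals to identify, say, $\int_\tau^T\psi\psi^t\,dt=Q_T-Q_\tau$.

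Combining these three identities with the representation (\ref{eq:lh-est}) of $\mathcal{R}(X^T)$ gives
\[
 \sigma^2\Lambda_T(s)=-\tilde{R}_T^t Q_T^{-1}\tilde{R}_T+\tilde{R}_\tau^t Q_\tau^{-1}\tilde{R}_\tau+(\tilde{R}_T-\tilde{R}_\tau)^t(Q_T-Q_\tau)^{-1}(\tilde{R}_T-\tilde{R}_\tau).
\]
To finish, I would invoke the null hypothesis: under $H_0$ we have $dX_t=\theta_0^t\psi(t,X_t)\,dt+\sigma\,dB_t$ with a single parameter $\theta_0$, so $\tilde{R}_T=Q_T\theta_0+\sigma R_T$, and likewise $\tilde{R}_\tau=Q_\tau\theta_0+\sigma R_\tau$ and $\tilde{R}_T-\tilde{R}_\tau=(Q_T-Q_\tau)\theta_0+\sigma(R_T-R_\tau)$. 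Substituting these three affine relations into the quadratic forms and expanding, each expression splits as a $\theta_0^t Q\theta_0$ piece, a cross term $2\sigma\theta_0^t R$, and a pure $\sigma^2 R^t Q^{-1}R$ piece.

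The key observation — the one thing worth checking carefully — is that the pure-$\theta_0$ pieces collapse via $-\theta_0^t Q_T\theta_0+\theta_0^t Q_\tau\theta_0+\theta_0^t(Q_T-Q_\tau)\theta_0=0$, and the cross terms collapse via $-2\sigma\theta_0^t R_T+2\sigma\theta_0^t R_\tau+2\sigma\theta_0^t(R_T-R_\tau)=0$. Only the $\sigma^2$-pieces survive, and dividing by $\sigma^2$ gives exactly the claimed representation. The main (minor) obstacle is bookkeeping in this last substitution step, in particular verifying the $(Q_T\theta_0)^t Q_T^{-1}(Q_T\theta_0)=\theta_0^t Q_T\theta_0$ simplification and the sign of each term; once the cancellations are written out, the result is immediate.
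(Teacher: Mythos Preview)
Your proposal is correct and follows essentially the same approach as the paper's proof: both compute the maximized log-likelihoods as the quadratic forms $\tilde{R}^t Q^{-1}\tilde{R}$, use the $H_0$ relation $\tilde{R}=Q\theta_0+\sigma R$ on each of the three pieces, and then observe that the $\theta_0$-dependent terms cancel by additivity of $Q$ and $R$ over $[0,\tau]\cup[\tau,T]$. The only cosmetic difference is that the paper substitutes $\tilde{R}=Q\theta_0+\sigma R$ into each likelihood factor separately before combining, whereas you first combine into a single expression in $\tilde{R}$ and then substitute; the algebra is identical.
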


\begin{proof}
Our aim is to compute an explicit expression of the ratio given in (\ref{eq:lh-est}).
Note that the likelihood function in the numerator of (\ref{eq:lfct}) can be represented as
\begin{equation*}
    L(\theta,X^T)=\exp\left( \frac{1}{\sigma^2}\theta^t\tilde{R}_T 
- \frac{1}{2\sigma^2}\theta^tQ_T\theta\right)
\end{equation*}
where $\tilde{R}_T$ and $Q_T$ are given in Lemma \ref{pr:ml-es}.
Denoting by $\theta_0$  the true  value of $\theta$, the representations
\begin{equation*}
    \hat{\theta}_{ML}^T =Q_T^{-1}\tilde{R}_T \quad\textrm{and}\quad \tilde{R}_T = Q_T \theta_0 + \sigma R_T,
\end{equation*}
where the latter can be obtained by plugging in the initial SDE
(\ref{eq:gen-ou}) and (\ref{eq:l-fct}), lead to
\begin{eqnarray*}
  L(\hat{\theta}_{ML}^{T},X^{T}) &=& \exp\left(\frac{1}{2\sigma^2}\tilde{R}_T^t Q_T^{-1}\tilde{R}_T   \right) \\
   &=& \exp\left( \frac{1}{2\sigma^2} \theta_0^tQ_T\theta_0 + \frac{1}{\sigma} R_T^t\theta_0 
+ \frac{1}{2}R_T^t Q_T^{-1} R_T \right).
\end{eqnarray*}
The same procedure yields an analog expression for $L(\hat{\theta}_{ML}^{\tau},X^{\tau})$. 
The additivity of  the integrals provides
\begin{equation*}
    \hat{\theta}_{ML}^{\tau,T}=(Q_{T}-Q_{\tau})^{-1}(\tilde{R}_{T}-\tilde{R}_{\tau})
\end{equation*}
and
\begin{equation*}
    \tilde{R}_{T}-\tilde{R}_{\tau} = (Q_{T}-Q_{\tau}) \theta_0 + \sigma (R_{T}-R_{\tau})
\end{equation*}
such that
\begin{eqnarray*}
  L(\hat{\theta}_{ML}^{\tau,T},X^{\tau,T}) &=& \exp\bigg( \frac{1}{2\sigma^2} 
\theta_0^t(Q_T-Q_{\tau})\theta_0 +  \frac{1}{\sigma} (R_T-R_{\tau})^t\theta_0\\
    & & +  \frac{1}{2}(R_T-R_{\tau})^t (Q_T-Q_{\tau})^{-1} (R_T-R_{\tau})\bigg).
\end{eqnarray*}
Under the null hypothesis, cancelation of several terms in (\ref{eq:lh-est}) 
proves the assertion.
\end{proof}

%\begin{equation}\label{eq:ratio-gen}
%    \sup_{\delta\in(0,1)} 2\ln \mathcal{L}_{\delta}(X^T)
%= P_{0,T}^t Q_{0,T}^{-1}P_{0,T} 
%-\inf_{\delta\in(0,1)}\left( P_{0,\tau}^t Q_{0,\tau}^{-1}P_{0,\tau} 
%+ P_{\tau, T}^t Q_{\tau, T}^{-1}P_{\tau , T}\right).
%\end{equation}

For the  rest of our investigations, we study  periodic functions
\begin{equation}\label{eq:phi-per}
 \varphi_j(t+\nu)=\varphi_j(t)
\end{equation}
where $\nu$ is the period observed in the data. Under the null hypothesis of no 
change, this implies periodicity of the mean reversion function, i.e. $L(t+\nu)=L(t)$.
We assume that we observe the process over some integer multiple of  periods, i.e.  $T=n \nu$, 
$n\in\mathbb{N}$. By Gram-Schmidt orthogonalization we may assume without loss of
generality that the basis functions $\varphi_1(t),\ldots,\varphi_p(t)$ form an orthonormal 
system in $L^2([0,\nu], \frac{1}{\nu} d\lambda )$, i.e. that
\begin{equation}
\label{eq:phi-ons}
\int_0^\nu \varphi_j(t)\varphi_k(t)dt=
\left\{
  \begin{array}{ll}
    \nu , &\quad j=k  \\
    0, &\quad  j\neq k.
  \end{array}
\right.
\end{equation}
Under these assumptions, the matrix $Q_T$ appearing in the test statistic $\Lambda_T(s)$, 
see Proposition \ref{pr:glr}, simplifies to
\begin{equation*}
 Q_T = \left(
  \begin{array}{cc}
  T\, I_{p\times p} & a_T \\
  a_T^t & b_T
  \end{array}
  \right).
%\label{eq:qm-per}
\end{equation*}

%%%%%%%%%%%%%%%
% MAIN RESULT %
%%%%%%%%%%%%%%%

\begin{theorem}\label{theo-1}
Let  $X^T=\{X_t,0\leq t \leq T\}$ be observations of the  mean
reversion process (\ref{eq:gen-ou}) with mean reversion function of the form
 (\ref{eq:l-fct}),
satisfying (\ref{eq:phi-per}) and (\ref{eq:phi-ons}). Denote by 
$\Lambda_T(s)=-2\ln(\mathcal{R}(X^T))$
the log-transformed generalized likelihood ratio test statistic for the test problem (\ref{eq:test-hypo}). 
Then, for any fixed $0<s_1<s_2<1$, under the null hypothesis,
\begin{equation*}
    \sup_{s\in[s_1,s_2]} \Lambda_T(s)  \stackrel{\mathcal{D}}{\longrightarrow} 
\sup_{s\in[s_1,s_2]} \frac{\| W(s)-sW(1)\|^2}{s(1-s)}
\end{equation*}
as $T\rightarrow \infty$. Here $\|\cdot \|$ denotes the Euclidean 
norm and $W$ is a $(p+1)$-dimensional standard Brownian motion.
%\begin{equation*}
% \textrm{Cov}\left(W(s),W(t)\right) = (s \wedge t) \left(
%  \begin{array}{cc}
%  \nu\, I_{p\times p} & 0 \\
%  0 & \int_0^ {\nu} (\tilde{h}(t))^2 dt + \frac{\nu\sigma^2}{2\alpha}
%  \end{array}
%  \right)
%%\label{eq:qm-per}
%\end{equation*}
\end{theorem}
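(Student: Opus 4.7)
The strategy is to pass to the limit in the explicit representation of $\Lambda_T(s)$ from Proposition~\ref{pr:glr} by establishing two jointly valid asymptotic statements: a deterministic ergodic limit for the matrix $Q_{sT}/T$ and a functional central limit theorem for the vector $R_{sT}/\sqrt T$. Specifically I would aim for the joint convergence, as processes in $s \in [s_1,s_2]$,
\begin{equation*}
\frac{1}{T} Q_{sT} \stackrel{P}{\longrightarrow} sA, \qquad \frac{1}{\sqrt T} R_{sT} \stackrel{\mathcal{D}}{\longrightarrow} A^{1/2} W(s),
\end{equation*}
where $A$ is a deterministic positive-definite $(p+1)\times(p+1)$ matrix and $W$ a standard $(p+1)$-dimensional Brownian motion. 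Since the mapping $(Q, R) \mapsto \Lambda(s)$ coming from Proposition~\ref{pr:glr} is continuous in the sup-norm topology on $[s_1,s_2]$ (where $s(1-s)$ is uniformly bounded below), the continuous mapping theorem then reduces the claim to an algebraic identity.

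For the first convergence, the upper-left $p \times p$ block of $Q_{sT}/T$ equals $s I_{p\times p}$ modulo an $O(1/T)$ boundary correction by the orthonormality assumption~(\ref{eq:phi-ons}) and $T=n\nu$. For the remaining entries, the cyclostationary ergodic theory of the generalized Ornstein--Uhlenbeck process under $H_0$---as developed in Dehling, Franke and Kott~\cite{De}---provides almost sure limits
\begin{equation*}
\frac{a_{sT}}{T} \longrightarrow s\bar a, \qquad \frac{b_{sT}}{T} \longrightarrow s\bar b,
\end{equation*}
with $\bar a_i = \frac{1}{\nu}\int_0^\nu \varphi_i(t)m(t)\,dt$ and $\bar b = \frac{1}{\nu}\int_0^\nu \E_{\pi_t}[X^2]\,dt$, where $\pi_t$ denotes the $\nu$-periodic invariant law and $m(t)$ its mean. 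The limit matrix $A$ then has $I_p$ in its upper-left block, $\bar b$ in its lower-right, and $\pm\bar a$ in its off-diagonal blocks; positive definiteness follows from the corresponding property of $Q_T/T$ for large $T$ established in~\cite{De}.

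For the second convergence, observe that $R_{sT}$ is a continuous vector martingale in the Brownian filtration whose predictable covariation satisfies $\langle R, R\rangle_{sT} = Q_{sT}$ (reconciling the sign conventions of Lemma~\ref{pr:ml-es} with the periodic simplification). The deterministic limit of $Q_{sT}/T$ then furnishes exactly the hypothesis of the multidimensional martingale functional central limit theorem (e.g.\ Jacod--Shiryaev), yielding $R_{sT}/\sqrt T \stackrel{\mathcal{D}}{\longrightarrow} A^{1/2}W(s)$ in $C([0,1],\R^{p+1})$. Combining both convergences via Slutsky and continuous mapping, and using the matrix identity $A^{1/2}(sA)^{-1}A^{1/2} = s^{-1} I$, yields
\begin{equation*}
\Lambda_T(s) \stackrel{\mathcal{D}}{\longrightarrow} \frac{\|W(s)\|^2}{s} + \frac{\|W(1)-W(s)\|^2}{1-s} - \|W(1)\|^2,
\end{equation*}
jointly in $s\in[s_1,s_2]$, and a short algebraic expansion of the right-hand side identifies it with $\|W(s)-sW(1)\|^2/(s(1-s))$. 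Applying the continuous mapping theorem to the sup-functional on $C([s_1,s_2])$ completes the proof. The main obstacle is strengthening the pointwise ergodic convergence of $a_{sT}/T$ and $b_{sT}/T$ to uniform convergence in $s$ on $[s_1,s_2]$; since these are continuous cumulative integrals with a continuous linear limit $s\mapsto s\bar a$ (respectively $s\bar b$), a Dini-type or equicontinuity argument, exploiting boundedness of $\varphi_i$ and $L^2$-control of $X_t$, should close this gap.
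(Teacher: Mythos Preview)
Your proposal is correct and follows essentially the same route as the paper: a martingale functional CLT for $R_{sT}/\sqrt{T}$ (the paper invokes Ethier--Kurtz rather than Jacod--Shiryaev, using exactly your observation that the predictable covariation is $Q_{sT}$), uniform almost-sure convergence $Q_{sT}/T \to s\Sigma$, then Slutsky and the continuous mapping theorem applied to the representation from Proposition~\ref{pr:glr}, followed by the same algebraic identification of the limit with the squared Brownian bridge. The only minor simplification the paper offers over your plan concerns the uniform convergence of $Q_{sT}/T$ that you flag as the main obstacle: rather than a Dini-type argument, it notes directly that $\|Q_{sT}/T - s\Sigma\| = s\,\|Q_{sT}/(sT) - \Sigma\|$ is small once $sT$ exceeds some fixed $T_0$ (by the pointwise a.s.\ limit from~\cite{De}), while for $sT \leq T_0$ both $Q_{sT}/T$ and $s\Sigma$ are $O(1/T)$.
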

%$\tilde{h}:[0,\infty)\rightarrow \R$ and the process $\tilde{Z}_t$ are
%defined by
%\begin{eqnarray}
%      \\
% \tilde{Z}_t &=&  \sigma e^{-\alpha t}
% \int_{-\infty}^te^{\alpha s}d\tilde{B}_s,
%\label{eq:z-tilde}
%\end{eqnarray}

The previous theorem has its counterpart in the theory of change point detection in an autoregressive 
sequence presented in Davis, Huang and Yao (see \cite{davis} Remark 2.2).
It is also noted there, that results from the type of Theorem~\ref{theo-1} are not satisfactory in application:
First, it is not clear how to choose the interval $[s_1,s_2]$ potentially containing a 
change point if  no information about the location of the change is available.
Second, the distribution of the limit which is the squared length of a multi-dimensional 
Brownian bridge  is not explicitly given such that further  analysis or a simulation 
study are necessary in order to specify the limit distribution explicitly. In order 
to avoid this inconvenience it is possible to consider the \emph{exact} test statistic 
$\sup_{0<s\leq 1} \Lambda_T(s)$. 
This was done by Davis, Huang and Yao in the situation of change point detection in an autoregressive model 
(see \cite{davis} Theorem 2.2). 
For this situation they proved that after some renormalization the log-transformed generalized likelihood ratio test statistic
converges in law to the Gumbel distribution. We have an analogous result for the change point detection in a periodic mean 
reversion process.

\begin{theorem}\label{theo-gumbel}
Under the same assumptions as in Theorem~\ref{theo-1}
it holds under the null hypothesis that
\begin{equation*}
  \big(\sup_{0<s\leq 1} \Lambda_T(s) - b_T\big)/a_T \stackrel{\mathcal{D}}{\longrightarrow} G,
\end{equation*}
as $T\rightarrow \infty$, where $G$ denotes a real-valued random variable satisfying
\begin{equation*}
    \P(G\leq x) = \exp(-2e^{-x/2}).
\end{equation*}
Here $b_T=\left(2\ln\ln \frac{T}{\nu} + \frac{p+1}{2}\ln\ln\ln \frac{T}{\nu} 
- \ln \Gamma(\frac{p+1}{2})\right)^2/(2\ln\ln \frac{T}{\nu})$,  $a_T=\sqrt{b_T/(2\ln\ln \frac{T}{\nu})}$ 
where $\Gamma$ is the gamma function.

\end{theorem}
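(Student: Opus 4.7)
The plan is to combine a strengthened version of Theorem~\ref{theo-1}, with the $[s_1,s_2]$ window allowed to shrink toward $(0,1)$, and a Darling--Erd\H{o}s type extreme value theorem for the squared norm of a $(p+1)$-dimensional Brownian bridge. This mirrors the approach of Davis, Huang and Yao in the autoregressive setting: first couple $\Lambda_T(s)$ to its Gaussian limit uniformly on a bulk that creeps toward the endpoints, then apply classical extreme value theory to the Gaussian process, and finally rule out contributions from the remaining boundary.

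\textbf{Step 1: Strong approximation in the bulk.} Using Proposition~\ref{pr:glr}, $\Lambda_T(s)$ is built from the $(p+1)$-dimensional continuous martingale $R_t$ and the random matrix $Q_t$. The periodic orthonormality (\ref{eq:phi-ons}) together with ergodicity of the generalized Ornstein--Uhlenbeck dynamics yields $Q_{sT}/T \to s\,Q_\infty$ uniformly in $s$ on compacts, and a Knight--Dambis time change (or a Koml\'os--Major--Tusn\'ady style embedding) couples $R_{sT}/\sqrt{T}$ to a standard $(p+1)$-dimensional Brownian motion $W(s)$. The target is to establish
\[
\sup_{s\in[\delta_T,\,1-\delta_T]}\Bigl|\Lambda_T(s)-\frac{\|W(s)-sW(1)\|^2}{s(1-s)}\Bigr|=o_\P\bigl((\log\log T)^{-1/2}\bigr),
\]
for some $\delta_T\downarrow 0$ of order $\nu/T$ up to logarithmic corrections.

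\textbf{Step 2: Extreme values of the bridge.} Under the logit time change $u=\log(s/(1-s))$, a direct covariance computation shows that the normalized bridge $(W(s)-sW(1))/\sqrt{s(1-s)}$ is a stationary $(p+1)$-dimensional Ornstein--Uhlenbeck process $Y(u)$ with covariance $e^{-|u-v|/2}I_{p+1}$, so that
\[
\sup_{\delta_T\le s\le 1-\delta_T}\frac{\|W(s)-sW(1)\|^2}{s(1-s)}=\sup_{|u|\le u_T}\|Y(u)\|^2,\qquad u_T=\log\frac{1-\delta_T}{\delta_T}.
\]
Classical extreme value results for the squared norm of a smooth stationary Gaussian process in dimension $p+1$ with exponential correlation (Leadbetter--Lindgren--Rootz\'en, Berman) yield a Gumbel limit after centering at the $\chi^2_{p+1}$ quantile of an appropriate level and rescaling by the square root of that quantile. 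With $\delta_T\asymp\nu/T$ one has $u_T\sim\log(T/\nu)$; the $\chi^2_{p+1}$ tail produces the factor $\Gamma((p+1)/2)$ appearing in $b_T$, the iterated-log corrections arise from inverting the $\chi^2_{p+1}$ tail, and the factor $2$ in $\exp(-2e^{-x/2})$ reflects the rate $\lambda=1/2$ of the OU correlation together with the two-sided interval $|u|\le u_T$.

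\textbf{Step 3: Endpoint negligibility; main obstacle.} Finally, one must show that $\sup_{s\in(0,\delta_T]}\Lambda_T(s)$ and its symmetric counterpart on $[1-\delta_T,1)$, after centering by $b_T$ and rescaling by $a_T$, tend to $-\infty$ in probability. This can be handled by exponential inequalities for the continuous martingale $R_{sT}$ together with uniform control of $Q_{sT}^{-1}$ on $(0,\delta_T]$, and symmetrically near $s=1$. The principal obstacle is Step~1: the coupling rate $o((\log\log T)^{-1/2})$ must be achieved \emph{uniformly} over a bulk whose complement has width only of order $\nu/T$, i.e.\ in the regime where the subsample $\{X_t:0\le t\le sT\}$ covers only $O(1)$ periods and the ergodic averaging that drove the argument of Theorem~\ref{theo-1} is just beginning to take hold; obtaining a sharp enough approximation in that window is the delicate technical heart of the proof.
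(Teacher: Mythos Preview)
Your strategy inverts the paper's decomposition, and that inversion is exactly what creates the obstacle you flag in Step~1. The paper does \emph{not} couple $\Lambda_T(s)$ to the Brownian--bridge functional on a bulk $[\delta_T,1-\delta_T]$ that creeps toward the endpoints. Instead it fixes $0<u<1/2$, notes that by Theorem~\ref{theo-1} the supremum over $[u,1-u]$ is tight and hence negligible once one subtracts $b_T\to\infty$, and puts all the work on the two boundary strips $(0,u]$ and $[1-u,1)$. On $(0,u]$ the statistic simplifies: the pair $(R_T-R_{sT})^t(Q_T-Q_{sT})^{-1}(R_T-R_{sT})-R_T^tQ_T^{-1}R_T$ is jointly tight there (this is the content of your Step~3, but applied to the \emph{complement} of the boundary quadratic form, not to the boundary itself), so one is reduced to $\sup_{0<s\le u}R_{sT}^tQ_{sT}^{-1}R_{sT}$. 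This is then linked, not to a continuous bridge, but to $\max_{1\le k\le n}\|S_k\|^2/k$ for partial sums of an i.i.d.\ Gaussian sequence, via a Kuelbs--Philipp strong approximation for the \emph{mixing} increments $\Delta R_k=R_k-R_{k-1}$; the exponential $\alpha$-mixing of $(\Delta R_k)$ is established separately from the Gaussian--Markov structure of $(X_t,B_t)$. Each boundary strip then contributes one factor $\exp(-e^{-x/2})$ by the Horv\'ath\,/\,Davis--Huang--Yao result, and the factor $2$ in the limit comes from asymptotic independence of the two strips, not from the two-sidedness of a logit interval.

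Your Step~1, as stated, is probably unattainable and in any case is the wrong target. At $s=\delta_T$ with $\delta_T\asymp\nu/T$ the subsample covers $O(1)$ periods, $Q_{sT}/(sT)$ is nowhere near $\Sigma$, and neither a Dambis--Dubins--Schwarz time change (which does not apply componentwise to a vector martingale with nondiagonal bracket) nor a KMT embedding (designed for i.i.d.\ summands) delivers the uniform $o_\P((\log\log T)^{-1/2})$ rate you need. The paper sidesteps this entirely: it never pushes any Gaussian approximation down to $sT=O(1)$, but truncates at $s>M/T$ for fixed large $M$ and shows the finitely many omitted values are asymptotically irrelevant. The ingredient missing from your outline is precisely the mixing strong approximation (Kuelbs--Philipp) that makes this discrete, endpoint-based route work; once you have it, the delicate uniform bridge coupling of your Step~1 is unnecessary.
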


%%%%%%%%%%%%%%%%%%%%%%%%%%%%%
% PROOF OF THM 1 %
%%%%%%%%%%%%%%%%%%%%%%%%%%%%%

\section{Proof of Theorem 1}
Before we can complete the proof of Theorem \ref{theo-1}, we have to establish some 
auxiliary results. First, we will study the asymptotic behavior of $\Lambda_T(s)$
in the case of a periodic mean reversion function, see (\ref{eq:phi-per}) 
and (\ref{eq:phi-ons}). Note that  by
Proposition \ref{pr:glr} we have the representation
 \begin{equation*}%\label{ratio}
    \Lambda_T(s)= - R_{T}^t Q_{T}^{-1}R_{T} + R_{sT}^t Q_{sT}^{-1}R_{sT} 
+ (R_{T} -R_{sT})^t (Q_{T}-Q_{sT})^{-1}(R_{T} -R_{sT}).
\end{equation*}
The first term,
\begin{equation*}
    R_{T}^t Q_{T}^{-1}R_{T}= \frac{1}{\sqrt{T}}R_{T}^t \Big(\frac{1}{T}Q_{T}\Big)^{-1}
\frac{1}{\sqrt{T}}R_{T},
\end{equation*}
has already been studied by Dehling, Franke and Kott \cite{De}. The following proposition summarizes the 
results of  Proposition 4.5, 5.1 and 5.2 in Dehling et al. \cite{De}.

\begin{proposition}\label{prop:old-pap}
We have that
\begin{equation*}
   \frac{1}{\sqrt{T}}R_{T} \xrightarrow{\mathcal{D}} N(0,\Sigma)
\end{equation*}
and
\begin{equation*}
  \frac{1}{T}Q_{T}\rightarrow\Sigma\textrm{, almost surely,}
\end{equation*}
as $T\rightarrow\infty$. The matrix $\Sigma$ is given by
 \begin{equation}\label{def:sigma}
 \Sigma = \left(
  \begin{array}{cc}
  \nu\, I_{p\times p} & \Lambda \\
  \Lambda^t & \omega
  \end{array}
  \right)
\end{equation}
where $ \Lambda:=(\Lambda_1,...,\Lambda_p)^t $ with $\Lambda_i=\int_0^{\nu}\varphi_i(t)\tilde{h}(t)dt$, $i=1,\ldots,p$, 
$\omega=\int_0^{\nu}(\tilde{h}(t))^2dt+\frac{\nu\sigma^2}{2\alpha}$ and
where $\tilde{h}:[0,\infty)\rightarrow \R$ is defined by
\begin{equation}\label{def:h-tilde}
  \tilde{h}(t) = e^{-\alpha t}\sum_{j=1}^p\mu_j\int_{-\infty}^t
  e^{\alpha s}\varphi_j(s)ds.
\end{equation}
Here, $N(0,\Sigma)$ denotes a normally distributed random vector with zero-mean and 
covariance matrix $\Sigma$.
\end{proposition}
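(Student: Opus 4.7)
The plan is to reduce both assertions to ergodic and martingale central limit arguments applied to a stationary approximation of the dynamics (\ref{eq:gen-ou}). Explicit integration of the SDE gives
\[
 X_t = e^{-\alpha t}X_0 + e^{-\alpha t}\int_0^t e^{\alpha s}L(s)\,ds + \sigma e^{-\alpha t}\int_0^t e^{\alpha s}\,dB_s,
\]
so that, after extending $B$ to a two-sided Brownian motion, one may couple $X_t$ to its stationary version $\tilde h(t) + \sigma Z_t$, where $\tilde h$ is defined in (\ref{def:h-tilde}) and $Z_t := e^{-\alpha t}\int_{-\infty}^t e^{\alpha s}\,dB_s$ is a stationary mean-zero Gaussian process with variance $\sigma^2/(2\alpha)$. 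The initial-data and boundary transients are exponentially small and contribute $O(1)$ to each block integral, hence are negligible after dividing by $T$ or $\sqrt{T}$.

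For the almost sure limit $\frac{1}{T}Q_T\to \Sigma$, I would treat the three blocks separately. The $G_T$-block limit is immediate from (\ref{eq:phi-per})--(\ref{eq:phi-ons}) together with $T=n\nu$. For $\frac{1}{T}a_T$, substitute $X_t$ by $\tilde h(t) + \sigma Z_t$: the deterministic part $\frac{1}{T}\int_0^T \varphi_i\tilde h\,dt$ converges by periodicity of $\varphi_i\tilde h$ to the constant $\Lambda_i$, while $\frac{\sigma}{T}\int_0^T \varphi_i(t) Z_t\,dt\to 0$ almost surely by the continuous-time ergodic theorem applied to the stationary Gaussian $Z$ (or by sampling along the lattice $\{k\nu\}$ and invoking Birkhoff). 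For $\frac{1}{T}b_T$, expanding $(\tilde h + \sigma Z)^2$ yields a periodic deterministic square whose time-average converges to $\int_0^\nu \tilde h^2\,dt$ (up to the normalization), a cross term that vanishes, and $\frac{\sigma^2}{T}\int_0^T Z_t^2\,dt$, whose ergodic limit is the stationary second moment $\sigma^2/(2\alpha)$; together these reproduce $\omega$.

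The distributional convergence $T^{-1/2}R_T \Rightarrow N(0,\Sigma)$ then follows from the observation that $R_T$ is a continuous, square-integrable, $(p+1)$-dimensional martingale in the Brownian filtration. Its matrix of mutual predictable quadratic variations agrees with $Q_T$ up to signs in the cross-block (the Wiener integrals contribute $\int_0^T \varphi_j\varphi_k\,dt$, the cross-block gives $-\int_0^T \varphi_i X_t\,dt$, and the bottom-right term is $\int_0^T X_t^2\,dt$), so by the previous step $T^{-1}\langle R\rangle_T\to \Sigma$ almost surely. Invoking the multivariate martingale CLT (Rebolledo's theorem, or Theorem VIII.3.11 of Jacod--Shiryaev; the Lindeberg condition is vacuous since $R_T$ has continuous paths) delivers the stated Gaussian limit, and joint convergence of $(T^{-1/2}R_T, T^{-1}Q_T)$ follows from Slutsky's theorem.

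The main technical obstacle is the ergodic control of the nonlinear functional $T^{-1}\int_0^T Z_t^2\,dt$ and of the mixed average $T^{-1}\int_0^T \varphi_i(t) Z_t\,dt$: these require exploiting stationarity and the exponentially fast mixing of the OU process $Z$, typically via $L^2$-bounds on covariance kernels $\mathrm{Cov}(Z_s, Z_t) = \frac{\sigma^2}{2\alpha}e^{-\alpha|t-s|}$ and a Borel--Cantelli argument along discrete sampling times. Once these ergodic lemmas are in hand, the remainder of the argument is a matter of assembling blocks, which is precisely the programme executed in Propositions~4.5, 5.1 and 5.2 of \cite{De} that the present proposition quotes.
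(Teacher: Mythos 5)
Your outline is sound and follows essentially the same route as the source the paper relies on for this proposition (Dehling, Franke and Kott \cite{De}, Propositions 4.5, 5.1 and 5.2): coupling $X_t$ to the stationary version $\tilde h(t)+\tilde Z_t$, a periodicity/ergodicity argument for $\tfrac1T Q_T$, and the martingale CLT applied to $R_T$ using $\langle R\rangle_T=Q_T$ — the paper itself gives no independent proof, only the citation, and its own proof of Proposition~\ref{prop:rtau} uses exactly your martingale-CLT mechanism. The only point to watch is normalization: with $\int_0^\nu\varphi_j\varphi_k\,dt=\nu\delta_{jk}$ your computation gives $\tfrac1T G_T= I_{p\times p}$ and $\tfrac1T\int_0^T\varphi_i\tilde h\,dt\to\Lambda_i/\nu$, so the stated $\Sigma$ (with top-left block $\nu I_{p\times p}$) is obtained under the normalization $\tfrac1n Q_T$ with $n=T/\nu$, the one actually used in the proof of Proposition~\ref{prop:rtau}; this is a bookkeeping discrepancy already latent in the paper rather than a flaw in your argument.
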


Now we want to investigate the second term of $\Lambda_T(s)$ which we rewrite as
\begin{equation*}
    R_{sT}^t Q_{sT}^{-1}R_{sT}= \frac{1}{\sqrt{T}}R_{sT}^t 
\Big(\frac{1}{T}Q_{sT}\Big)^{-1}\frac{1}{\sqrt{T}}R_{sT}.
\end{equation*}
We will show that the process $\big(\frac{1}{\sqrt{T}}R_{sT}\big)_{s\in [s_1,s_2]}$ 
converges in distribution to a Gaussian process on $[s_1,s_2]$, and that $\frac{1}{T}Q_{sT}$  
converges in probability uniformly on $[s_1,s_2]$.

We need the following functional version of the asymptotic normality proved in \cite{De}

\begin{proposition}\label{prop:rtau}
As $ T\rightarrow\infty$, the sequence of processes $(R^{(T)}_s)_{0\leq s\leq 1}$, where 
\begin{equation*}%\label{eq:li-rtau}
    R^{(T)}_s:=\frac{1}{\sqrt{T}}R_{sT},
\end{equation*}
converges in distribution to a $(p+1)$-dimensional Wiener-process $ R^\ast $  with 
$ R_s^\ast \sim N(0,s\Sigma)  $  and  where $\Sigma$ is defined in
 (\ref{def:sigma}). Thus the covariance function of the process $ R^\ast =(R^{\ast,1},...,R^{\ast,p+1})^t $ is of the form 
$$ {\rm Cov}\left(R^{\ast,i}_s,R^{\ast,j}_t\right) = (s\wedge t)\Sigma_{ij}; \ \ \  
\mbox{for} \ i,j=1,\ldots,p+1.$$
\end{proposition}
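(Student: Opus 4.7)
My plan is to recognize $(R_s^{(T)})_{s\in[0,1]}$ as a continuous $(p+1)$-dimensional martingale in the index $s$ (with $T$ fixed), compute its quadratic variation explicitly, verify that this variation converges to $s\Sigma$, and then invoke a functional central limit theorem for continuous local martingales (Rebolledo / Jacod--Shiryaev) to pass to the limit. The finite-dimensional marginal convergence is essentially a strengthening of Proposition~\ref{prop:old-pap}; the new content is tightness and joint convergence as a process in $s$.

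First I would note that, for fixed $T$, the process
\[
s \mapsto R_{sT}=\Bigl(\int_0^{sT}\varphi_1(t)\,dB_t,\ldots,\int_0^{sT}\varphi_p(t)\,dB_t,\,-\int_0^{sT}X_t\,dB_t\Bigr)^t
\]
is a continuous martingale with respect to the time-changed filtration $\mathcal{F}_{sT}$, because each coordinate is an It\^o integral against $B$ and the integrands are $\mathcal{F}_t$-adapted. By It\^o isometry, its quadratic covariation matrix is, after rescaling by $1/T$,
\[
\langle R^{(T)}\rangle_s = \frac{1}{T}\begin{pmatrix} \bigl(\int_0^{sT}\varphi_j\varphi_k\,dt\bigr)_{j,k\le p} & -\bigl(\int_0^{sT}\varphi_j(t)X_t\,dt\bigr)_{j\le p} \\ -\bigl(\int_0^{sT}\varphi_k(t)X_t\,dt\bigr)_{k\le p} & \int_0^{sT}X_t^2\,dt \end{pmatrix}.
\]

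The second step is to show that $\langle R^{(T)}\rangle_s \to s\,\Sigma$ almost surely, uniformly in $s\in[0,1]$. For the upper left $p\times p$ block the integrand is deterministic; periodicity~\eqref{eq:phi-per} together with~\eqref{eq:phi-ons} and $T=n\nu$ gives the pointwise limit $s\nu\,\mathbf{1}_{j=k}$, and uniformity on $[0,1]$ follows because the integrand $\varphi_j\varphi_k$ is bounded. For the mixed and bottom entries, Proposition~\ref{prop:old-pap} applied at $s=1$ provides the a.s.\ convergence of $T^{-1}\int_0^T \varphi_j(t)X_t\,dt$ to $\Lambda_j$ and of $T^{-1}\int_0^T X_t^2\,dt$ to $\omega$. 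These extend to every fixed $s$ by applying the same ergodic argument on $[0,sT]$. To upgrade pointwise convergence to uniform convergence, I would use the monotone/Dini trick: $s \mapsto T^{-1}\int_0^{sT} X_t^2 dt$ is non-decreasing with continuous limit $s\omega$, so pointwise convergence on a countable dense set implies uniform convergence on $[0,1]$; the same device works for the mixed terms after decomposing $\varphi_j X_t$ into its positive and negative parts and applying the monotone argument separately.

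The third step is to apply the martingale functional CLT (e.g.\ Theorem~VIII.3.11 in Jacod--Shiryaev): a sequence of continuous local martingales whose quadratic variations converge in probability, uniformly on compact sets, to a continuous deterministic matrix-valued function $C(s)$ converges in distribution in $C([0,1],\R^{p+1})$ to the continuous Gaussian martingale with covariance $C(s)$. Applied here with $C(s)=s\Sigma$, this yields $R^{(T)} \xrightarrow{\mathcal{D}} R^\ast$, where $R^\ast$ is a $(p+1)$-dimensional Brownian motion with $R^\ast_s\sim N(0,s\Sigma)$ and covariance function $\mathrm{Cov}(R^{\ast,i}_s,R^{\ast,j}_t)=(s\wedge t)\Sigma_{ij}$, which is exactly the claim.

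The main obstacle is the uniform-in-$s$ convergence of the quadratic variation, and in particular of the cross terms involving $X_t$: the naive ergodic theorem only delivers convergence at each fixed $s$. Establishing uniformity requires either the monotonicity/Dini argument sketched above (which is clean for $\int X_t^2\,dt$ but needs the positive/negative decomposition for $\int \varphi_j(t)X_t\,dt$) or a modulus-of-continuity estimate based on the uniform boundedness in $s$ of $T^{-1}\int_0^{sT} X_t^2 dt$. Once uniform convergence of $\langle R^{(T)}\rangle$ is in hand, the rest of the argument is a routine application of the continuous martingale CLT.
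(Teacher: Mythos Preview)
Your proposal is correct and follows essentially the same route as the paper: recognize $R^{(T)}$ as a continuous martingale, show that its predictable quadratic covariation $\langle R^{(T)}\rangle_s$ converges to $s\Sigma$ (the paper simply cites \cite{De}, p.~184, for this), and then apply a functional CLT for continuous martingales (the paper uses Ethier--Kurtz~\cite{EthKur}, p.~339, where you invoke Jacod--Shiryaev). Your extra care about \emph{uniform} convergence of the bracket is more than the martingale FCLT actually demands---pointwise convergence in probability of $\langle R^{(T)}\rangle_s$ for each fixed $s$ suffices---so the ``main obstacle'' you flag is in fact not an obstacle at all.
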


\begin{proof}
Remember that $ T=n\nu $ and note that the vector-valued processes
$$   R^{(T)}_t=\Bigg(\frac{1}{\sqrt{n}}\int_0^{n\nu t}\varphi_1(s)dB_s,..., 
\frac{1}{\sqrt{n}}\int_0^{n\nu t}\varphi_p(s)dB_s,
     \frac{1}{\sqrt{n}}\int_0^{n\nu t}X_sdB_s \Bigg) $$
are martingales with respect to the filtrations $ {\cal F}^{(T)}_t:=\sigma(B_s;s\leq n\nu t) $.
The associated covariance processes are given by
\[     
\langle  R^{(T)}, R^{(T)}\rangle_t = \left(\begin{array}{cccc}
       \frac{1}{n}\int_0^{n\nu t}\varphi_1\varphi_1ds  & ... &  
\frac{1}{n}\int_0^{n\nu t}\varphi_1\varphi_pds &  
       \frac{1}{n}\int_0^{n\nu t}\varphi_1 X_sds \\
\vdots &  & \vdots & \vdots \\ 
 \frac{1}{n}\int_0^{n\nu t}\varphi_p\varphi_1ds  & ... &  
\frac{1}{n}\int_0^{n\nu t}\varphi_p\varphi_pds & 
     \frac{1}{n}\int_0^{n\nu t}\varphi_p X_sds \\[2mm]
 \frac{1}{n}\int_0^{n\nu t}\varphi_1X_sds  & ... &  \frac{1}{n}\int_0^{n\nu t}\varphi_pX_sds &  
\frac{1}{n}\int_0^{n\nu t} X^2_sds 
\end{array}\right). 
\]
As was shown in \cite{De} (see p.184), these matrices converge for $ n\rightarrow \infty $ 
almost surely towards  the matrix
\[   
\left(\begin{array}{cccc}
      t \int_0^{\nu}\varphi_1\varphi_1ds  & ... &  t \int_0^{\nu}\varphi_1\varphi_pds &  
     t  \int_0^{\nu}\varphi_1 \tilde{h}ds \\
\vdots &  & \vdots & \vdots \\ 
    t \int_0^{\nu}\varphi_p\varphi_1ds  & ... & t \int_0^{\nu}\varphi_p\varphi_pds & 
   t  \int_0^{\nu}\varphi_p \tilde{h}ds \\[2mm]
  t  \int_0^{\nu}\varphi_1\tilde{h}ds  & ... & t \int_0^{\nu}\varphi_p\tilde{h}ds &  
t\left( \int_0^{\nu} \tilde{h}^2ds+\frac{\nu\sigma^2}{2\alpha}\right) \\
\end{array}\right)   =t\Sigma. 
\]
The functional central limit theorem for continuous martingales (p.339 in \cite{EthKur}) now 
implies that the sequence of continuous $ {\cal F}^{(n)}_t  $-martingales
$ R^{(n)} $ converges in distribution toward the unique continuous Gaussian martingale with 
covariance function $ t\Sigma $.
\end{proof}

\begin{proposition}\label{prop:qtau}
Let $Q_t$ be defined as in Lemma \ref{pr:ml-es}. Then,  as $T\rightarrow\infty$,
\begin{equation*}
    \frac{1}{T}Q_{sT} \longrightarrow s\Sigma
\end{equation*}
almost surely uniformly on $[0,1]$, where $\Sigma$ is given in (\ref{def:sigma}).
\end{proposition}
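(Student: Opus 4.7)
The plan is to decompose $\frac{1}{T}Q_{sT}$ into its three blocks and handle each separately, combining pointwise almost-sure convergence (delivered by Proposition~\ref{prop:old-pap}) with an equicontinuity argument to upgrade to uniform convergence on $[0,1]$. Since $s\mapsto s\Sigma$ is continuous and the family $\{s\mapsto \tfrac{1}{T}Q_{sT}\}_{T\geq 1}$ consists of continuous functions on the compact interval $[0,1]$, it suffices to show (i) pointwise a.s.\ convergence on a countable dense subset, and (ii) uniform-in-$T$ equicontinuity almost surely.

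For the pointwise step at a fixed $s\in(0,1]$, I would write $sT=\lfloor sn\rfloor\nu+r_T$ with $r_T\in[0,\nu)$ and observe that the entries of $Q_{sT}-Q_{\lfloor sn\rfloor\nu}$ are bounded by a deterministic constant times $\sup_{0\leq t\leq T}(1+|X_t|+X_t^2)$ restricted to an interval of length at most $\nu$. Dividing by $T=n\nu$ and applying Proposition~\ref{prop:old-pap} with $\lfloor sn\rfloor$ replacing $n$ gives $\tfrac{1}{T}Q_{sT}\to s\Sigma$ a.s.\ for each such $s$; the case $s=0$ is trivial.

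Block by block, the upper-left $p\times p$ block is deterministic: using the periodicity and orthonormality in (\ref{eq:phi-per})--(\ref{eq:phi-ons}) one obtains directly
\[
\Big|\tfrac{1}{T}\textstyle\int_0^{sT}\varphi_j(t)\varphi_k(t)\,dt-s\,\Sigma_{jk}\Big|\leq C/T
\]
uniformly in $s\in[0,1]$, so uniform convergence is immediate. The scalar entry $\tfrac{1}{T}b_{sT}=\tfrac{1}{T}\int_0^{sT}X_t^2\,dt$ is nondecreasing in $s$, and its pointwise a.s.\ limit $s\mapsto s\omega$ is continuous on $[0,1]$; Dini's theorem (in the Pólya form for monotone functions converging to a continuous limit) then promotes pointwise to uniform convergence a.s.

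The main obstacle is the cross block $\tfrac{1}{T}a_{sT}$, whose integrand $\varphi_j(t)X_t$ is not of a single sign, so monotonicity is unavailable. For equicontinuity I would apply the Cauchy--Schwarz inequality,
\[
\Big|\tfrac{1}{T}\textstyle\int_{s_1 T}^{s_2 T}\varphi_j(t)X_t\,dt\Big|\leq \|\varphi_j\|_\infty\sqrt{s_2-s_1}\cdot\sqrt{\tfrac{1}{T}\int_0^T X_t^2\,dt},
\]
and invoke the a.s.\ convergence $\tfrac{1}{T}b_T\to\omega$ to control the right-hand factor uniformly in $T$ for large $T$. This gives a.s.\ uniform $1/2$-Hölder continuity of the paths $s\mapsto\tfrac{1}{T}a_{sT}$, hence equicontinuity of the family on $[0,1]$. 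Combined with pointwise a.s.\ convergence on the rationals (obtained as above from Proposition~\ref{prop:old-pap}) and the continuity of the limit $s\mapsto s\Lambda$, a standard Arzelà--Ascoli argument yields uniform a.s.\ convergence of $\tfrac{1}{T}a_{sT}$ on $[0,1]$. Assembling the three blocks completes the proof. The technical core is thus the Cauchy--Schwarz-based equicontinuity step together with making sure the exceptional null sets coming from Proposition~\ref{prop:old-pap} (applied at countably many $s$), from $\tfrac{1}{T}b_T\to\omega$, and from the Pólya step, can be taken inside a single null set.
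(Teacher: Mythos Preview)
Your argument is correct, but the paper's proof is far shorter and avoids the block decomposition and the Dini/Cauchy--Schwarz/Arzel\`a--Ascoli machinery entirely. The paper observes that Proposition~\ref{prop:old-pap} already gives $\frac{1}{T}Q_T\to\Sigma$ almost surely along \emph{real} $T\to\infty$; hence, on a single null set, for every $\epsilon>0$ there is a $T_0$ with $\|\tfrac{1}{T}Q_T-\Sigma\|\le\epsilon$ for all $T\ge T_0$. Then for $s\in[T_0/T,1]$ one has $sT\ge T_0$ and simply writes $\|\tfrac{1}{T}Q_{sT}-s\Sigma\|=s\|\tfrac{1}{sT}Q_{sT}-\Sigma\|\le\epsilon$, while for $s\le T_0/T$ both $\tfrac{1}{T}Q_{sT}$ and $s\Sigma$ are small because $\|Q_u\|$ is bounded on $[0,T_0]$ and $s\le T_0/T\to 0$. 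No separate treatment of the three blocks, no equicontinuity, and no countable-dense-set bookkeeping is needed.

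What each approach buys: the paper's trick exploits the fact that the limit is \emph{linear} in $s$, so the uniformity is inherited directly from the single a.s.\ limit $\tfrac{1}{T}Q_T\to\Sigma$; it is shorter and makes clear that one null set suffices without any countability argument. Your route is more general-purpose: it would still work if the convergence in Proposition~\ref{prop:old-pap} were only known along $T=n\nu$, and your Cauchy--Schwarz equicontinuity bound for the cross block is a robust device that does not rely on the limit being linear. In the present setting, though, the paper's two-line splitting is decidedly more economical.
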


\begin{proof} 
By Proposition \ref{prop:old-pap}, we know that, almost surely, 
$ \frac{1}{T}Q_T\rightarrow\Sigma$ as $ T \rightarrow\infty $ . Thus, given $\epsilon >0$, there 
exists a $T_0$ such that for all $T\geq T_0$
\[
  \| \frac{1}{T} Q_T -\Sigma \| \leq \epsilon.
\]
Let $B:=\sup_{0\leq s\leq T_0} \|Q_s\|$. Then we get for any $T\geq T_0$ and $T_0/T\leq s\leq 1$
\[
  \| \frac{1}{T} Q_{Ts}-s\Sigma \| = s\|\frac{1}{Ts}Q_{Ts} -\Sigma  \| \leq \epsilon.
\]
For $s\leq T_0/T$ we obtain
\[
  \| \frac{1}{T} Q_{Ts}-s\Sigma \| \leq \frac{1}{T} B +\frac{T_0}{T} \|\Sigma\| \leq \epsilon,
\]
for $T$ large enough. The last two inequalities together show that for $T$ large enough, we have
$\| \frac{1}{T} Q_{Ts}-s\Sigma \| \leq \epsilon$, and this proves the statement of the proposition.
\end{proof}

We can finally finish  the proof of  Theorem~\ref{theo-1}, which essentially follows the method described in 
Davis, Huang and Yao (see \cite{davis}, p.8).

\begin{proof}[Proof of Theorem 1]
By Slutsky's theorem and Propositions~\ref{prop:old-pap}, \ref{prop:rtau} 
and \ref{prop:qtau} we obtain
\begin{eqnarray*}
% \nonumber to remove numbering (before each equation)
  \Lambda_T(s) &=& - R_{T}^t Q_{T}^{-1}R_{T} + R_{sT}^t Q_{sT}^{-1}R_{sT} 
+ (R_{T} -R_{sT})^t (Q_{T}-Q_{sT})^{-1}(R_{T} -R_{sT})\\
   &\xrightarrow{\mathcal{D}}& - \|W(1) \|^2 + \frac{\|W(s) \|^2}{s} 
+ \frac{\|W(1) -W(s) \|^2}{1-s} \\
    &=& \frac{\| W(s)-s W(1)\|^2}{s(1-s)}
\end{eqnarray*}
in $C[s_1,s_2]$. 
Here we have used the fact that the process $ W(t):=\Sigma^{-1/2}R^\ast_t $ is a Brownian motion with covariance matrix $ I_{p+1} $, where $ I_{p+1} $ is the $(p+1)\times (p+1)$ identity matrix, and that
$ (R_t^\ast)^t\Sigma^{-1}R^\ast_t=\|W(t)\|^2 $. 
The assertion about the supremum of $\Lambda_T(s)$ is justified 
by the continuous mapping theorem.
\end{proof}

\section{Proof of Theorem 2}

The proof of Theorem~2 is motivated by the proof of an analogous result for discrete time AR processes,
given by Davis et al. \cite{davis}. 
We need the following result which is proved in Davis et al. \cite{davis} and which 
relies on  Lemma 2.2 in Horv\'{a}th~\cite{hor}).

\begin{proposition}[Corollary A.2 in Davis et al. \cite{davis}]\label{le:cor-davis}
Let $ Y_1,Y_2,\ldots $ be an i.i.d. sequence of $ (p+1)$-dimensional random vectors with 
$ \E[Y_1]=0 $ and $ \E[Y_1Y_1^t]=I_{p+1} $. Define $ S_k=\sum_{i=1}^k Y_i $. If
$ \max_{1\leq i\leq p+1} \E|Y_{i,1}|^{2+r}<\infty $ for some $ r>0 $, then
\begin{equation*}
  \big(\,\max_{1\leq k \leq n} \|S_k\|^2 - b_n\big)/a_n 
  \stackrel{\mathcal{D}}{\longrightarrow} G^*,
\end{equation*}
as $n\rightarrow \infty$, where $G^*$ denotes a real-valued random 
variable satisfying
\begin{equation*}
    P(G^*\leq x) = \exp(-e^{-x/2}).
\end{equation*}
Thereby, it is $b_n=\left(2\ln\ln n + \frac{p+1}{2}\ln\ln\ln n 
- \ln \Gamma(\frac{p+1}{2})\right)^2/(2\ln\ln n)$, $a_n=\sqrt{b_n/(2\ln\ln n)}$.
\end{proposition}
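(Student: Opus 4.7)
The plan is to reduce the i.i.d.\ statement to its Brownian analogue via a multivariate strong approximation, use the self-similarity of Brownian motion to time-change the normalized maximum $\max_{k}\|S_k\|^2/k$ into the supremum of a stationary chi-squared process on an interval of length $\ln n$, and then invoke classical extreme-value theory for $\chi^2$ processes (which is the content of Horv\'ath's Lemma~2.2).

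First I would apply a multivariate KMT/Einmahl strong approximation: under the moment assumption $\E\|Y_1\|^{2+r}<\infty$ there exists, on an enlarged probability space, a standard $(p{+}1)$-dimensional Brownian motion $W$ with
\[
\max_{1\leq k\leq n}\|S_k-W(k)\|=o\bigl(\sqrt{n/\ln\ln n}\bigr)\quad\text{a.s.}
\]
Since the norming $a_n$ is of constant order and $\sqrt{b_n}\sim\sqrt{2\ln\ln n}$, propagating this error through $\|\cdot\|^2/k$ via the polarization identity
$\|S_k\|^2-\|W(k)\|^2=2\langle W(k),S_k-W(k)\rangle+\|S_k-W(k)\|^2$ produces a remainder that is negligible after centering by $b_n$ and scaling by $a_n$, so it suffices to prove the result with $W(k)$ in place of $S_k$.

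Next I would use the time change $Z(u):=e^{-u/2}W(e^u)$, $u\geq 0$, which is a stationary $\R^{p+1}$-valued Ornstein--Uhlenbeck process with $Z(u)\sim N(0,I_{p+1})$. This yields the identification
\[
\max_{1\leq k\leq n}\frac{\|W(k)\|^2}{k}\;=\;\sup_{0\leq u\leq\ln n}\|Z(u)\|^2
\]
up to a boundary contribution one bounds by a standard chaining argument. The scalar process $\|Z(u)\|^2$ is a stationary chi-squared process with $p+1$ degrees of freedom; Horv\'ath's Lemma~2.2 (essentially Berman--Leadbetter--Pickands theory for smooth $\chi^2$ processes) then gives
\[
\P\Bigl(\sup_{0\leq u\leq T}\|Z(u)\|^2\leq b(T)+a(T)x\Bigr)\;\longrightarrow\;\exp(-e^{-x/2})
\]
as $T\to\infty$, with $b(T),\,a(T)$ exactly the expressions in the proposition statement (with $n$ replaced by $T$). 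Setting $T=\ln n$ finishes the proof; the factor $1/2$ in the Gumbel exponent is inherited from the upper $\chi^2_{p+1}$ tail $\P(\|Z(0)\|^2>x)\sim c_p\,x^{(p-1)/2}e^{-x/2}$.

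The main obstacle is the quantitative precision of the strong-approximation step: because $a_n$ is of constant order while $b_n$ grows only like $2\ln\ln n$, the KMT remainder in $\|S_k\|^2/k$ has to vanish \emph{uniformly} in $k\leq n$ after subtracting $b_n$, and this is exactly the regime where the $(2+r)$-th moment hypothesis is sharp. A subsidiary technical point is that in the second step the maximum over integers $k$ must be shown to differ from the continuous supremum by $o(a_n)$, which follows from the H\"older continuity of the Gaussian process $Z$ via a standard dyadic discretization.
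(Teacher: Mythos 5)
The paper offers no proof of this proposition: it is imported verbatim as Corollary~A.2 of Davis, Huang and Yao \cite{davis}, with the remark that it rests on Lemma~2.2 of Horv\'ath \cite{hor}. Your sketch is a faithful reconstruction of exactly that chain of reasoning --- a multivariate strong approximation to replace $S_k$ by $W(k)$, the time change $Z(u)=e^{-u/2}W(e^u)$ turning $\max_k\|W(k)\|^2/k$ into the supremum of a stationary $\chi^2_{p+1}$ process over an interval of length $\ln n$, and Berman--Pickands extreme-value theory for that process, which is precisely the content of Horv\'ath's lemma. So you are not taking a different route from the paper; you are filling in the route the paper delegates to its references, and the normalizations $a_n\to 1$, $b_n\sim 2\ln\ln n$ and the tail $\P(\chi^2_{p+1}>x)\sim c_p x^{(p-1)/2}e^{-x/2}$ all come out consistently.

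Two remarks. First, you have silently (and correctly) read the statement as concerning $\max_{1\le k\le n}\|S_k\|^2/k$ rather than $\max_{1\le k\le n}\|S_k\|^2$ as printed; the latter is of order $n$ and cannot have a Gumbel limit with $b_n\sim 2\ln\ln n$, and the paper's own use of the result (the final display of the proof of Proposition~4.7, with $\|S_{[sT]}\|^2/[sT]$) confirms the missing $/k$ is a typo. It would be worth saying this explicitly rather than correcting it tacitly. Second, your phrasing that the strong-approximation remainder must vanish \emph{uniformly in} $k\le n$ is slightly off: the bound $2\|W(k)\|\,\|S_k-W(k)\|/k=O(k^{-\lambda}\sqrt{\ln\ln k})$ is only small for large $k$, and the correct argument handles $k\le K$ separately by noting that $\max_{k\le K}\|S_k\|^2/k=O_P(1)$ lies below the threshold $b_n+a_nx\to\infty$, so that range cannot influence the limit law; one then lets $K\to\infty$ after $n\to\infty$. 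This is the same truncation device the paper itself uses (the events involving $\sup_{s\in(M/T,1]}$ in the proof of Proposition~4.7), and with it your sketch is sound.
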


Recall that
\begin{equation*}%\label{ratio}
    \Lambda_T(s)= - R_{T}^t Q_{T}^{-1}R_{T} + R_{sT}^t Q_{sT}^{-1}R_{sT} 
+ (R_{T} -R_{sT})^t (Q_{T}-Q_{sT})^{-1}(R_{T} -R_{sT})
\end{equation*}
and $T=n\nu$, $\nu$ fixed. Let us assume for a moment that $ \nu = 1 $.
We write  $\Lambda_n(s)$, $R_{n}$ and $Q_{n}$ 
for $\Lambda_T(s)$, $R_{T}$ and $Q_{T}$, respectively, since  the asymptotic 
framework is $n\rightarrow \infty$.

In order to simplify the investigation of the process $ (X_t)_{t\geq0} $ we introduce some auxiliary process
$$ \tilde{X}_t:= \tilde{h}(t)+\tilde{Z}_t $$
with
$$ \tilde{h}(t):=e^{-\alpha t}\int_{-\infty}^te^{\alpha s}\sum_{j=1}^p\mu_j\varphi_j(s)ds    $$
and
$$ \tilde{Z}_t=\sigma e^{-\alpha t}\int_{-\infty}^t e^{\alpha s}d\tilde{B}_s $$
where  
$$ \tilde{B}_s:=\check{B}_s{\bf 1}_{\mathbb{R}_+}(s)+\hat{B}_{-s}{\bf 1}_{\mathbb{R}_-}(s) $$
is bilateral Brownian motion defined through two independent standard Brownian motions
$ (\check{B}_t)_{t\geq0} $  and $ (\hat{B}_t)_{t\geq0} $. 
First we note that this process is also a solution to the stochastic differential equation (\ref{eq:gen-ou}), which does not depend on 
$ X_0 $. Furthermore, it was proved in \cite{De} (Lemma 4.4) that one has 
$|\tilde{X}_t-X_t |\rightarrow 0$, almost surely, as $ t\rightarrow\infty $. This fact will give us the possibility to reduce the 
asymptotic analysis on $ (X_t)_{t\geq0} $ to the one of $ (\tilde{X}_t)_{t\geq0} $. The essential feature of the new process 
$ (\tilde{X}_t)_{t\geq0} $ is that we can chop it into peaces which form a stationary sequence of $ C[0,1] $-valued random 
variables $ (\tilde{X}_{k-1+s})_{s\in[0,1]}; k\in\mathbb{N} $ (see Dehling, Franke, Kott \cite{De}, Lemma 4.3).
This gives us the possibility to use methods from ergodic theory in the investigation of $ (X_t)_{t\geq0} $. In the following we 
assume without loss of generality that $ (X_t)_{t\geq0} $ has the above representation in terms of $ \tilde{h} $ and 
$ (\tilde{Z}_t)_{t\geq0} $.

The following proposition is  essential  for the proof of Theorem~\ref{theo-gumbel}. 
First, define for two $\sigma$-algebras $\mathcal{A}$ and $\mathcal{B}$  the quantities
\begin{equation*}
    \alpha(\mathcal{A},\mathcal{B}) = \sup_{A\in\mathcal{A}, 
B\in\mathcal{B}} |\P(A\cap B) -\P(A)\P(B)|
\end{equation*}
and
\begin{equation*}
  \rho(\mathcal{A},\mathcal{B}) = \sup_{F\in L^2(\mathcal{A},\P),G\in L^2(\mathcal{B},\P)}
    {\rm Corr}(F,G).
\end{equation*}
It is known that 
$$  \alpha(\mathcal{A},\mathcal{B})\leq   \frac{1}{4}\rho(\mathcal{A},\mathcal{B})  . $$
For a stationnary sequence of random variables $(\zeta_k)_{k\in\mathbb{N}}$ define 
the mixing coefficient $ \alpha_\zeta $ by
\begin{equation*}
    \alpha_\zeta(n) = \sup_{k\in\mathbb{N}}  \alpha\big(\sigma(\zeta_i;i\leq k),
\sigma(\zeta_i;i\geq k+n)\big).
\end{equation*} 
The sequence $(\zeta_k)_{k\in\mathbb{N}}$ is called strongly mixing 
if $\alpha_\zeta(n)\rightarrow 0$ as $n\rightarrow\infty$.

\begin{proposition} \label{Prop-R_k-mixing}
The sequence of random vectors $(\Delta R_k)_{k\in\mathbb{N}}$ defined by
\begin{equation*}
  \Delta R_{k} :=(R_k-R_{k-1})= \left(  \int_{k-1}^{k} \varphi_1(t)dB_{t},
    \dots , \int_{k-1}^{k} \varphi_p( t)dB_{ t}   , -\int_{k-1}^{k} X_{t}dB_{t} \right)^t
\end{equation*}
is strongly mixing with mixing coefficient $\alpha$ of order
\begin{equation*}
    \alpha_{\Delta R}(n) = \mathcal{O}(e^{-\alpha(n-1)}).
\end{equation*}
\end{proposition}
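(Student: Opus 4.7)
\bigskip\noindent
\textbf{Proof plan.}
The plan is to enlarge the two tail $\sigma$-algebras appearing in $\alpha_{\Delta R}(n)$ to Gaussian-generated ones and then to exploit the exponential decorrelation of the stationary Ornstein--Uhlenbeck process. Thanks to the inequality $\alpha \leq \tfrac14 \rho$ recorded just above, it suffices to bound a maximal-correlation coefficient between two Gaussian sub-$\sigma$-algebras.

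First I would replace $X$ by the auxiliary stationary process $\tilde X_t = \tilde h(t) + \tilde Z_t$, which the discussion preceding the proposition justifies. Then $\Delta R_k$ becomes a measurable functional of the bilateral Brownian motion $(\tilde B_s)_{s\in \R}$: the first $p$ coordinates depend only on $\tilde B$-increments over $[k-1,k]$, while $-\int_{k-1}^{k}\tilde X_t\,d\tilde B_t$ depends in addition on the initial state $\tilde Z_{k-1}$. In particular,
\[
\sigma(\Delta R_i : i \leq k) \subseteq \mathcal{F}_k := \sigma(\tilde B_s : s \leq k),
\]
a Gaussian $\sigma$-algebra.

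For the future tail I would use the Markov representation
\[
\tilde Z_t = e^{-\alpha(t-(k+n-1))}\tilde Z_{k+n-1} + \sigma \int_{k+n-1}^{t} e^{-\alpha(t-s)}\,d\tilde B_s, \qquad t \geq k+n-1,
\]
to conclude $\sigma(\Delta R_i : i \geq k+n) \subseteq \mathcal{Z} \vee \mathcal{G}$, where $\mathcal{Z} := \sigma(\tilde Z_{k+n-1})$ and $\mathcal{G} := \sigma(\tilde B_s - \tilde B_{k+n-1}: s \geq k+n-1)$ is independent of $\mathcal{F}_k \vee \mathcal{Z}$. A short conditioning argument (using that $\E[\,\cdot\,|\,\mathcal{F}_k \vee \mathcal{Z}] = \E[\,\cdot\,|\,\mathcal{Z}]$ on $\mathcal{G}$-measurable quantities) then shows $\rho(\mathcal{F}_k, \mathcal{Z} \vee \mathcal{G}) = \rho(\mathcal{F}_k, \mathcal{Z})$. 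Since both $\mathcal{F}_k$ and $\mathcal{Z}$ sit inside the Gaussian family of $\tilde B$, the Kolmogorov--Rozanov theorem identifies $\rho(\mathcal{F}_k, \mathcal{Z})$ with the maximal linear correlation; the best linear predictor is $\E[\tilde Z_{k+n-1}\mid \mathcal{F}_k] = e^{-\alpha(n-1)}\tilde Z_k$, so a one-line calculation yields $\rho(\mathcal{F}_k,\mathcal{Z}) = e^{-\alpha(n-1)}$, whence $\alpha_{\Delta R}(n) \leq \tfrac14 e^{-\alpha(n-1)}$.

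The main technical point will be the inclusion $\sigma(\Delta R_i : i \geq k+n) \subseteq \mathcal{Z} \vee \mathcal{G}$: one must carefully split each It\^o integral $-\int_{i-1}^{i} \tilde X_t\, d\tilde B_t$ with $i \geq k+n$ into a $\mathcal{Z}$-measurable piece and a $\mathcal{G}$-measurable piece, by inserting the Markov formula for $\tilde Z_t$ into the integrand. Once this $\sigma$-algebra bookkeeping is in place, everything else reduces to Gaussian second-moment computations together with the stated $\alpha \leq \tfrac14 \rho$ inequality.
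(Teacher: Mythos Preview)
Your argument is correct and reaches the same bound $\alpha_{\Delta R}(n)\le\tfrac14\,e^{-\alpha(n-1)}$, but by a genuinely different and more economical route than the paper. The paper works with the $C[0,1]^2$-valued sequence $\xi^{(k)}=(X^{(k)},B^{(k)})$, invokes its Markov property to reduce the tail-$\sigma$-field mixing coefficient to a two-time coefficient $\alpha(\sigma(\xi^{(m)}),\sigma(\xi^{(m+n)}))$, and then proves and applies a separate Hilbert-space Gaussian lemma (Lemma~\ref{Le-rho_Corr-Ineq}, essentially Kolmogorov--Rozanov for $H$-valued Gaussians) together with an explicit covariance computation (Lemma~\ref{le:sup-corr-order}) to extract the factor $e^{-\alpha(n-1)}$. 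You instead exploit the concrete OU structure directly: you enlarge the past to the full Brownian past $\mathcal{F}_k=\sigma(\tilde B_s:s\le k)$ and, on the future side, peel off the independent post-$(k{+}n{-}1)$ increments $\mathcal{G}$ via the identity $\rho(\mathcal{F}_k,\mathcal{Z}\vee\mathcal{G})=\rho(\mathcal{F}_k,\mathcal{Z})$, leaving only the one-dimensional Gaussian $\sigma$-field $\mathcal{Z}=\sigma(\tilde Z_{k+n-1})$. Because $\mathcal{Z}$ is one-dimensional, the Kolmogorov--Rozanov step collapses to a single projection, and $e^{-\alpha(n-1)}$ falls out immediately from the OU autoregression $\tilde Z_{k+n-1}=e^{-\alpha(n-1)}\tilde Z_k+(\text{indep.})$. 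Your approach bypasses the $C[0,1]$-valued Markov-chain framework and the two auxiliary lemmas entirely; the paper's approach, in exchange, is more structural and would transfer more readily to other Gaussian Markov settings where no scalar ``sufficient state'' like $\tilde Z_{k+n-1}$ is available.
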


\begin{proof}
Define the $ C[0,1] $-valued stochastic process $(\xi^{(k)})_{k\in\mathbb{N}}$ by 
\begin{equation*}
    \xi^{(k)} :=\left(\begin{array}{c} X^{(k)} \\
                               B^{(k)} \end{array}\right) := \left(
              \begin{array}{c}
                (X_{t+k})_{t\in[0,1]} \\
                (B_{t+k})_{t\in[0,1]} \\
              \end{array}
            \right). 
\end{equation*}
The process $(\xi^{(k)})_{k\in\mathbb{N}}$ is both a Markov and a Gaussian process. Hence, 
by making use of the Markov property and by applying the correlation inequality for 
Gaussian processes from Lemma \ref{Le-rho_Corr-Ineq} we  obtain
\begin{eqnarray*}
  \alpha_{\xi}(n) &=& \sup_{m\in\mathbb{R}} 
   \alpha\big(\sigma(\xi^{(k)};k\leq m),\sigma(\xi^{(k)};k\geq m+n)\big) \\
    &=& \sup_{m\in\mathbb{R}} \alpha\big(\sigma(\xi^{(m)} ),\sigma(\xi^{(m+n)})\big)\\
    &\leq& \sup_{m\in\mathbb{N}}\sup_{a,b,c,d \in L^2[0,1]} 
     \textrm{Corr}(\langle a,B^{(m)}\rangle + \langle b,X^{(m)}\rangle, \langle c,B^{(m+n)}\rangle + 
\langle d,X^{(m+n)}\rangle)\\ 
&\leq& e^{-\alpha(n-1)}\sup_{a,b,c,d,\in L^2[0,1]} 
     \textrm{Corr}(\langle a,B^{(1)}\rangle + \langle b,X^{(1)}\rangle, 
        \langle c,B^{(2)}\rangle + \langle d,X^{(2)}\rangle)\\
    &=&\mathcal{O}(e^{-\alpha(n-1)})
\end{eqnarray*}
where the last equality is stated in Lemma~\ref{le:sup-corr-order}.
Note that each $ \Delta R_k $ may be represented as 
\begin{equation*}
    \Delta R_k={\frak f}_k(\xi^{(k)})
\end{equation*}
where ${\frak f}_k:C[0,1]\times C[0,1]\rightarrow\mathbb{R}$ is a measurable function. 
Since the $\sigma$-algebra generated by $ {\frak f}_k(\xi^{(k)}):\Omega\rightarrow\mathbb{R} $ 
is smaller or equal the  
$\sigma$-algebra generated by $ \xi^{(k)}:\Omega\rightarrow C[0,1]\times C[0,1] $, 
the bound for $\alpha_{\xi}(n)$ established above is also valid for $ \alpha_{\Delta R}(n) $. 
\end{proof}

\begin{lemma} \label{Le-rho_Corr-Ineq}
Let $ (H,\langle.,.\rangle) $ be separable Hilbert-space and $ (X,Y) $ be a pair of $ H $-valued random variables
with Gaussian joint law. Then one has
$$  
\rho(\sigma(X),\sigma(Y))\leq\max_{a,b\in H}{\rm Corr}(\langle a,X\rangle,\langle b,Y\rangle) 
.$$
\end{lemma}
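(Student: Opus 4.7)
The plan is to reduce to the finite-dimensional Gaussian setting by a projection argument and then invoke the classical Kolmogorov--Rozanov theorem, which asserts that the maximal correlation of a jointly Gaussian pair is attained at linear functionals.

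Exploiting separability of $H$, I would pick an orthonormal basis $(e_k)_{k\geq 1}$ and form the projections $X_n := \sum_{k=1}^n \langle X, e_k\rangle e_k$ and, analogously, $Y_n$. Then $\sigma(X_n) \uparrow \sigma(X)$ and $\sigma(Y_n) \uparrow \sigma(Y)$, so Doob's $L^2$-martingale convergence theorem applied to arbitrary $F \in L^2(\sigma(X))$ and $G \in L^2(\sigma(Y))$ yields, via bilinearity of covariance and Cauchy--Schwarz, $\rho(\sigma(X_n), \sigma(Y_n)) \to \rho(\sigma(X), \sigma(Y))$. This reduces matters to a uniform-in-$n$ bound in the finite-dimensional Gaussian case.

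For fixed $n$, $(X_n, Y_n)$ is a jointly Gaussian vector in $\mathbb{R}^{2n}$ and a canonical correlation decomposition would produce orthonormal linear functionals $U_1, \ldots, U_n$ of $X_n$ and $V_1, \ldots, V_n$ of $Y_n$ with $\text{Corr}(U_i, V_j) = \lambda_i \delta_{ij}$ and $1 \geq \lambda_1 \geq \cdots \geq \lambda_n \geq 0$. The classical fact I would then invoke is that the multivariate Hermite polynomials in the $U_i$'s form an orthonormal basis of $L^2(\sigma(X_n))$, and likewise for the $V_i$'s. Joint Gaussianity together with the diagonal correlation structure gives the product identity $\E[H_\alpha(U)\, H_\beta(V)] = \delta_{\alpha, \beta} \prod_{i=1}^n \lambda_i^{\alpha_i}$, from which $|\text{Corr}(F, G)| \leq \lambda_1$ follows for arbitrary centered $F, G$ by expansion and Cauchy--Schwarz. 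The bound is attained by $(F, G) = (U_1, V_1)$, and since $U_1 = \langle a, X\rangle$ and $V_1 = \langle b, Y\rangle$ for some $a, b \in H$, one obtains $\rho(\sigma(X_n), \sigma(Y_n)) \leq \max_{a, b \in H} \text{Corr}(\langle a, X\rangle, \langle b, Y\rangle)$. Passing $n \to \infty$ would then finish the argument.

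The main obstacle is the Hermite product identity above, which is the heart of the Kolmogorov--Rozanov theorem; it is typically established via Mehler's generating formula for the univariate Hermite polynomials tensored across the independent canonical directions $(U_i, V_i)$, exploiting that the $\lambda_i \leq 1$ to control the tail of the expansion. A secondary, more routine concern is the infinite-dimensional reduction step: one must verify that near-extremal $(F, G)$ on $(\sigma(X), \sigma(Y))$ can be approximated in $L^2$ by pairs measurable with respect to the finite-dimensional sub-$\sigma$-algebras, which is precisely where Doob's theorem enters.
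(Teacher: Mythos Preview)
Your proposal is correct and follows essentially the same route as the paper: reduce to finite-dimensional coordinate projections via an orthonormal basis, then apply the Kolmogorov--Rozanov theorem for jointly Gaussian vectors. The paper's proof is more terse, citing Bradley's \emph{Introduction to Strong Mixing Conditions} (Proposition~3.18 for the approximation $\rho(\sigma(X),\sigma(Y))=\lim_n\rho(\sigma(V_1,\dots,V_n),\sigma(W_1,\dots,W_n))$, and Theorem~9.2 for the identification of the finite-dimensional $\rho$ with the supremum of correlations of linear functionals), whereas you unpack both steps explicitly---Doob's martingale convergence for the first and the Hermite/Mehler argument for the second---making your argument more self-contained but not materially different.
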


\begin{proof}
Let $ (e_i)_{i\in\mathbb{N}} $ be a system of  orthonormal basis vectors for the Hilbert space $ H $. 
If we set $ V_i:=\langle X,e_i\rangle $ and $ W_j:=\langle Y,e_j\rangle $ then we have the 
representations
$$ X=\sum_{i=1}^\infty V_i e_i \ \ \ \mbox{and} \ \ \ Y=\sum_{j=1}^\infty W_j e_j .$$
Note that $ \sigma(X)=\sigma(V_1,V_2,...) $ and $ \sigma(Y)=\sigma(W_1,W_2,...) $. 
It follows from Prop. 3.18 and Thm. 9.2 in \cite{Bradley} that
\begin{eqnarray*}
     \rho(\sigma(X),\sigma(Y)) 
&=& \lim_{n\rightarrow\infty}\rho(\sigma(V_1,..,V_n),\sigma(W_1,...,W_n))\\
&=& \lim_{n\rightarrow\infty}\sup_{a_1,...,a_n, b_1,...,b_n \in \mathbb{R}}
           {\rm Corr}\Big(\sum_{i=1}^na_iV_i,\sum_{j=1}^nb_jW_j\Big).
\end{eqnarray*}
Since the correlation is homogeneous we can assume without loss of generality that 
$ \sum a_i^2=1 $ and $ \sum b_j^2=1 $ holds. From this then follows 
\begin{eqnarray*}
     \rho(\sigma(X),\sigma(Y)) 
&\leq& \sup_{(a_i)_{i\in\mathbb{N}},(b_j)_{j\in\mathbb{N}}:\sum a_i^2=\sum b_j^2=1}
           {\rm Corr}\Big(\sum_{i=1}^\infty a_iV_i,\sum_{j=1}^\infty b_jW_j\Big)\\
&\leq& \sup_{a,b\in H:\|a\|=\|b\|=1}
           {\rm Corr}\Big( \langle a,X\rangle,\langle b,Y\rangle \Big).
\end{eqnarray*}
The second inequality follows since one has for $ a\in H $ and $ a_i:=\langle a,e_i\rangle $ that
$$ \langle a,X\rangle=\sum_{i=1}^\infty a_iV_i  .  $$
This finishes the proof of the lemma.
\end{proof}

\begin{lemma}\label{le:sup-corr-order}
For all $ a,b,c,d\in L^2[0,1] $ we have 
\begin{eqnarray*}
 && {\rm Corr}(\langle a,B^{(m)}\rangle + \langle b,X^{(m)}\rangle, 
         \langle c,B^{(m+n)}\rangle + \langle d,X^{(m+n)}\rangle) \\
 &=& e^{-\alpha(n-1)}{\rm Corr}(\langle a,B^{(m)}\rangle + \langle b,X^{(m)}\rangle, 
         \langle c,B^{(m+1)}\rangle + \langle d,X^{(m+1)}\rangle) .
\end{eqnarray*}
\end{lemma}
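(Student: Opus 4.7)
The plan is to exploit the Markov structure of the joint process $(X_t, B_t)_{t \geq 0}$ together with the explicit exponential-decay formula coming from the OU SDE. Observe that $A_m := \langle a, B^{(m)}\rangle + \langle b, X^{(m)}\rangle$ is measurable with respect to $\mathcal{F}_{m+1} := \sigma(B_u : u \leq m+1)$. Consequently, in any covariance with $A_m$ on the left, only the $\mathcal{F}_{m+1}$-conditional expectation of the second argument will contribute.

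First I would write out, for each $s \in [0,1]$, the Markov decomposition
$$X_{s+m+n} = e^{-\alpha(s+n-1)}\, X_{m+1} + \int_{m+1}^{s+m+n} e^{-\alpha(s+m+n-u)}\bigl(L(u)\,du + \sigma\,dB_u\bigr),$$
together with the analogous $\mathcal{F}_{m+1}$-splitting of $B_{s+m+n}$ into a measurable part and an increment independent of $\mathcal{F}_{m+1}$. The stochastic integrals past time $m+1$ drop out of the covariance with $A_m$, and (once the contribution of the Brownian block beyond $m+1$ is read as an increment, independent of $A_m$) integration against $c$ and $d$ in $L^2[0,1]$ collapses the covariance to
$$\text{Cov}\bigl(A_m,\; \langle c, B^{(m+n)}\rangle + \langle d, X^{(m+n)}\rangle\bigr) = \Bigl(\int_0^1 d(s)\, e^{-\alpha(s+n-1)}\,ds\Bigr)\, \text{Cov}(A_m, X_{m+1}).$$
Pulling out $e^{-\alpha(n-1)}$ from the exponential and recognising the remaining factor as exactly what one obtains with $n=1$ gives the desired scaling at the covariance level.

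For the correlation, the variance denominators must cancel out. Here I would invoke stationarity of the sequence $(X^{(k)}, B^{(k)})_{k \in \mathbb{N}}$: using the stationary version $\tilde{X}$ from Lemma 4.3 of \cite{De} (for which $|\tilde{X}_t - X_t| \to 0$ almost surely), the random variable $\langle c, B^{(k)}\rangle + \langle d, X^{(k)}\rangle$ has $k$-independent variance, so the normalising factor is the same on both sides of the claimed equality. The $e^{-\alpha(n-1)}$ factor then transfers verbatim from the covariance to the correlation. The delicate point I expect to spend the most time on is pinning down the $\mathcal{F}_{m+1}$-measurable piece of the Brownian block $B^{(m+n)}$ correctly, so that no $n$-independent remainder survives to spoil the exact $e^{-\alpha(n-1)}$ scaling; once that bookkeeping is settled, the remainder is a direct Gaussian computation.
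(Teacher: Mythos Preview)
Your outline is essentially the paper's argument. The paper also reduces to the explicit OU representation of $X_{m+n+s}$ and reads off the exponential factor; the only cosmetic difference is that it expands from the base point $X_m$ rather than $X_{m+1}$, obtaining
\[
X_{m+n+s}=e^{-\alpha(n+s)}X_m+h(n+s)+\sigma e^{-\alpha(n+s)}\int_0^{n+s}e^{\alpha r}\,dB_{m+r},
\]
splits the stochastic integral at $r=1$, and thus extracts a common prefactor $e^{-\alpha n}$ in the covariance. The identity with factor $e^{-\alpha(n-1)}$ then follows by writing the identical expression for $n=1$ and comparing. Your choice of base point $X_{m+1}$ is equivalent and slightly more direct.

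On your ``delicate point'': the paper does not split $B^{(m+n)}$ into an $\mathcal{F}_{m+1}$-measurable piece plus an independent increment. It opens by asserting that $B^{(m+n)}$ is independent of $\sigma(B^{(m)},X^{(m)})$ and therefore drops the entire $\langle c,B^{(m+n)}\rangle$ term from the covariance in one stroke. After that, only $\langle d,X^{(m+n)}\rangle$ survives on the right, and stationarity of $(X^{(k)})_k$ alone handles the variance in the denominator---you do not need joint stationarity of the pair $(X^{(k)},B^{(k)})$. So the bookkeeping you anticipate never arises in the paper's version, and your displayed covariance formula (with no $c$-contribution) is exactly what one gets. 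Note, however, that both the paper's independence claim and your appeal to stationarity of $B^{(k)}$ are literally correct only if $B^{(k)}$ is read as the increment block $(B_{k+t}-B_k)_{t\in[0,1]}$ rather than the raw path $(B_{k+t})_{t\in[0,1]}$; with the raw path, the $\mathcal{F}_{m+1}$-measurable piece $B_{m+1}\int_0^1 c(s)\,ds$ really does produce an $n$-independent remainder that spoils the exact scaling. Since downstream only $\Delta R_k$ is used and it depends on $B$ solely through its increments, the increment reading is the intended one and your worry dissolves.
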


\begin{proof}
Since $ B^{(m+n)} $ is independent from $ \sigma(B^{(m)},X^{(m)}) $ we have that
\begin{eqnarray*}
 && {\rm Corr}(\langle a,B^{(m)}\rangle + \langle b,X^{(m)}\rangle, 
         \langle c,B^{(m+n)}\rangle + \langle d,X^{(m+n)}\rangle) \\
&=& {\rm Corr}(\langle a,B^{(m)}\rangle + \langle b,X^{(m)}\rangle, 
          \langle d,X^{(m+n)}\rangle) \\
 &=& \frac{\textrm{Cov}(\langle a,B^{(m)}\rangle + \langle b,X^{(m)}\rangle, 
         \langle d,X^{(m+n)}\rangle)}
   {\sqrt{\textrm{Var}(\langle a,B^{(n)}\rangle + \langle b,X^{(n)}\rangle)}
    \sqrt{\textrm{Var}(\langle d,X^{(m+n)}\rangle)}} \\
   &=& \frac{\textrm{Cov}(\langle a,B^{(m)}\rangle + \langle b,X^{(m)}\rangle, 
          \langle d,X^{(m+n)}\rangle)}
   {\sqrt{\textrm{Var}(\langle a,B^{(n)}\rangle + \langle b,X^{(n)}\rangle)}
    \sqrt{\textrm{Var}(\langle d,X^{(n)}\rangle)}} .
\end{eqnarray*}
Note that we used the fact that the sequence $ (X^{(m)})_{m\in\mathbb{N}} $ is stationary.

We will use the fact that $ (X_t)_{t\geq m} $ is the unique solution 
of the SDE (\ref{eq:gen-ou}) with initial condition $ X_{m} $ to see that 
$ X^{(m+n)} $ has the representation
\[  
X_{m+n+s}=e^{-\alpha (n+s)}X_m+h(n+s)
            +\sigma e^{-\alpha (n+s)}\int_0^{n+s}e^{\alpha r}dB_{m+r} .
\]
This representation follows from Lemma~4.2 in Dehling, Franke, Kott (2010).
We use this fact to compute the covariance in the above formula:

\begin{eqnarray*}
  && {\rm Cov}(\langle a,B^{(m)}\rangle + \langle b,X^{(m)}\rangle, \langle d,X^{(m+n)}\rangle) \\
   &=& e^{-\alpha n}{\rm Cov}\Bigg(\langle a,B^{(m)}\rangle + \langle b,X^{(m)}\rangle, 
          \int_0^1 d(s)e^{-\alpha s}X_m ds\Bigg) \\
  && +   {\rm Cov}\Bigg(\langle a,B^{(m)}\rangle + \langle b,X^{(m)}\rangle, 
          \int_0^1 d(s)h(n+s) ds\Bigg) \\
    && + e^{-\alpha n}{\rm Cov}\Bigg(\langle a,B^{(m)}\rangle + \langle b,X^{(m)}\rangle, 
          \int_0^1 d(s) \sigma e^{-\alpha s}\int_0^1e^{\alpha r}dB_{m+r} ds\Bigg) \\
  && + e^{-\alpha n}{\rm Cov}\Bigg(\langle a,B^{(m)}\rangle + \langle b,X^{(m)}\rangle, 
          \int_0^1 d(s) \sigma e^{-\alpha s}\int_1^{n+s}e^{\alpha r}dB_{m+r} ds\Bigg) \\
\end{eqnarray*}
Note that the second term on the right vanishes, since the right entry in the covariance 
is deterministic. Further, the fourth term vanishes, since the Brownian increments on 
the interval $ [m+1,m+n+s] $ are independent with respect to $ \sigma(B^{(m)},X^{(m)}) $.
We thus have 
\begin{eqnarray*}
  && {\rm Cov}(\langle a,B^{(m)}\rangle + \langle b,X^{(m)}\rangle, \langle d,X^{(m+n)}\rangle) \\
   &=& e^{-\alpha n}{\rm Cov}\Bigg(\langle a,B^{(m)}\rangle + \langle b,X^{(m)}\rangle, 
          \int_0^1 d(s)e^{-\alpha s}X_m ds\Bigg) \\
    && + e^{-\alpha n}{\rm Cov}\Bigg(\langle a,B^{(m)}\rangle + \langle b,X^{(m)}\rangle, 
          \int_0^1 d(s) \sigma e^{-\alpha s}\int_0^1e^{\alpha r}dB_{m+r} ds\Bigg) .
\end{eqnarray*}
The result follows since we can do the same reasoning for $ n=1 $. 
\end{proof}

\begin{corollary} \label{Cor-starke-Approximation}
There exists an iid-sequence of $ \mathbb{R}^{p+1} $-valued Gaussian random variables 
$ \zeta_i;i\in\mathbb{N} $ such that for $ U_k:=\sum_{i=1}^k\zeta_i $ one has almost surely
$$ R_k-U_k=O(k^{1/2-\lambda}) \ \ \ \mbox{for some $ \lambda>0 $ as $ k\rightarrow\infty $}.$$ 
\end{corollary}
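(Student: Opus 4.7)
The plan is to apply a multivariate strong invariance principle for partial sums of a stationary, exponentially $\alpha$-mixing sequence to $(\Delta R_k)_{k\in\mathbb{N}}$. Proposition~\ref{Prop-R_k-mixing} already provides the exponential $\alpha$-mixing, and after two preliminary reductions (stationarity, and moments of all orders) a classical strong approximation produces a Brownian motion $W$ with $\|R_k - W(k)\| = O(k^{1/2-\lambda})$ almost surely; setting $\zeta_i := W(i) - W(i-1)$ then finishes the proof.

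First I would reduce to a stationary setting by passing from $(X_t)$ to the bilateral stationary version $(\tilde X_t)$ introduced above. By Lemma~4.4 of \cite{De}, $|\tilde X_t - X_t|$ tends to zero almost surely at an exponential rate, so replacing $R_k$ by the analogous $\tilde R_k$ formed from $\tilde X$ changes the partial sum by an almost surely bounded quantity that is absorbed into $O(k^{1/2-\lambda})$. After this reduction, $(\Delta\tilde R_k)_{k\in\mathbb{N}}$ is stationary by Lemma~4.3 of \cite{De}.

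Next I would verify the moment bounds required by the strong approximation theorem. The first $p$ components of $\Delta R_k$ are Wiener integrals of deterministic, bounded, periodic functions and are therefore centered Gaussians with all moments finite. The last component $-\int_{k-1}^k X_t\, dB_t$ is a martingale increment whose $2q$-th moment is controlled by Burkholder-Davis-Gundy via $\E\big[\big|\int_{k-1}^k X_t\, dB_t\big|^{2q}\big] \leq C_q\, \E\big[\big(\int_{k-1}^k X_t^2\, dt\big)^q\big]$, and the right-hand side is finite for every $q \geq 1$ thanks to the Gaussian tails of $\tilde X_t$ on a bounded time interval.

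With stationarity, exponential mixing and moments of all orders in hand, a standard multivariate almost sure invariance principle for weakly dependent sequences (in the style of Kuelbs-Philipp or Philipp-Stout) yields a $(p+1)$-dimensional Brownian motion $W$ and some $\lambda > 0$ such that almost surely $\|R_k - W(k)\| = O(k^{1/2-\lambda})$, with the covariance matrix of $W$ identified as the long-run covariance $\Sigma$ from Propositions~\ref{prop:old-pap} and \ref{prop:rtau}. Defining $\zeta_i := W(i) - W(i-1)$ then produces the required i.i.d.\ $N(0,\Sigma)$ sequence with $U_k = W(k)$. The main difficulty is bibliographical rather than mathematical: one must locate a strong approximation theorem whose hypotheses precisely fit our continuous-time, vector-valued, $\alpha$-mixing setting, so that the power-law rate $O(k^{1/2-\lambda})$ applies to vector partial sums; once the stationary reduction and the moment bounds are in place, the exponential decay of $\alpha_{\Delta R}(n)$ is far more than is needed.
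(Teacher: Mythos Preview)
Your proposal is correct and follows essentially the same route as the paper: the paper's proof consists of a single sentence invoking Proposition~\ref{Prop-R_k-mixing} together with the Kuelbs--Philipp strong approximation theorem \cite{Philipp}, while you have (correctly) spelled out the ingredients that make that citation legitimate---the reduction to the stationary bilateral version (which the paper handles just before Proposition~\ref{Prop-R_k-mixing} by assuming $X=\tilde X$ without loss of generality), the moment bounds, and the identification $\zeta_i=W(i)-W(i-1)$.
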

\begin{proof}
This follows from Proposition \ref{Prop-R_k-mixing} and the theorem from Kuelbs and Philipp on 
strong approximation of mixing random sequences (see \cite{Philipp}).
\end{proof}

\begin{remark}
The previous corollary has its analogue in the autoregressive situation which was treated by Davis, Huang and Yao
(see formula 2.2 in \cite{davis}).
\end{remark}

\begin{remark}
In the following we will denote by $ \Gamma_{p+1} $ the covariance matrix of the Gaussian 
random variable $ \zeta_1 $. It then follows that the sequence of random variables 
$ \Gamma^{-1}_{p+1}R_{[nt]}/\sqrt{n};t\in[0,1] $ converges in distribution toward a 
$ p+1 $-dimensional Brownian motion with covariance matric $ I_{p+1} $. Here 
$ I_{p+1} $ denotes the identity matrix with $ p+1 $ rows. It follows from Proposition
\ref{prop:rtau} that $ \Gamma_{p+1}=\Sigma $. 
\end{remark}

\begin{proposition} \label{Prop-Lambda-Approx}
For all $\delta>0$ one has as $u\rightarrow 0$:
\begin{equation*}
    \limsup_{T\rightarrow \infty} \P\left(\Big|\sup_{0< s \leq u} \Lambda_T(s) 
- \sup_{0< s \leq u}   R_{sT}^t Q_{sT}^{-1}R_{sT}\Big| > a_T\delta\right) 
\rightarrow 0
\end{equation*}
and
\begin{equation*}
    \limsup_{T\rightarrow \infty} \P \left(\Big|\sup_{1-u< s \leq  1} \Lambda_T(s) 
- \sup_{1-u< s \leq 1}   (R_{T} -R_{sT})^t (Q_{T}-Q_{sT})^{-1}
(R_{T} -R_{sT}) \Big| > a_T\delta\right)\rightarrow0.
\end{equation*}

\end{proposition}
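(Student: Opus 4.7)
The plan is to isolate the dominant term in $\Lambda_T(s)$ near each boundary of $(0,1)$, observe that the remainder is a difference of the form $A_T(s)-A_T(0)$ (or $C_T(s)-C_T(1)$) whose oscillation vanishes as $s$ approaches the boundary, and quantify this via the functional limit theorems in Propositions~\ref{prop:rtau} and~\ref{prop:qtau}, exploiting the fact that the normalisation $a_T$ remains bounded (in fact tends to $1$) as $T\to\infty$.

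Write $C_T(s) := R_{sT}^t Q_{sT}^{-1} R_{sT}$, $A_T(s) := (R_T-R_{sT})^t(Q_T-Q_{sT})^{-1}(R_T-R_{sT})$, and $B_T := R_T^t Q_T^{-1}R_T$. With the natural conventions $R_0=0$ and $Q_0=0$, one has $B_T = A_T(0) = C_T(1)$, and Proposition~\ref{pr:glr} yields the decomposition
\[
\Lambda_T(s) = -B_T + C_T(s) + A_T(s),
\]
so that $\Lambda_T(s) - C_T(s) = A_T(s)-A_T(0)$ and $\Lambda_T(s) - A_T(s) = C_T(s)-C_T(1)$. The elementary inequality $|\sup f - \sup g| \leq \sup |f-g|$ therefore reduces the first claim to controlling $\sup_{0\leq s\leq u}|A_T(s)-A_T(0)|$ and the second claim to controlling $\sup_{1-u\leq s\leq 1}|C_T(s)-C_T(1)|$.

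For any fixed $u<1$, Proposition~\ref{prop:rtau} gives $R_{sT}/\sqrt T \stackrel{\mathcal{D}}{\longrightarrow} R^{\ast}_s$ in $C[0,1]$ and Proposition~\ref{prop:qtau} gives $Q_{sT}/T \to s\Sigma$ almost surely, uniformly on $[0,1]$. Because $(1-s)\Sigma\geq (1-u)\Sigma$ is invertible on $[0,u]$, the inverse $((Q_T-Q_{sT})/T)^{-1}$ converges almost surely uniformly on $[0,u]$ to $((1-s)\Sigma)^{-1}$; combining with Slutsky yields
\[
A_T(\cdot) \stackrel{\mathcal{D}}{\longrightarrow} A^{\ast}(\cdot),\qquad A^{\ast}(s) := \frac{\|W(1)-W(s)\|^2}{1-s},
\]
in $C[0,u]$, where $W := \Sigma^{-1/2}R^{\ast}$ is a standard $(p+1)$-dimensional Brownian motion. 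The limit $A^{\ast}$ is continuous on $[0,u]$ with $A^{\ast}(0)=\|W(1)\|^2$, so the continuous-mapping theorem applied to $f\mapsto\sup_{0\leq s\leq u}|f(s)-f(0)|$ gives
\[
\sup_{0\leq s\leq u}|A_T(s)-A_T(0)| \stackrel{\mathcal{D}}{\longrightarrow} Y_u := \sup_{0\leq s\leq u}|A^{\ast}(s)-A^{\ast}(0)|.
\]

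A direct inspection of the definitions of $b_T$ and $a_T$ shows that $a_T\to 1$: if $L:=\ln\ln(T/\nu)$, the bracket in the numerator of $b_T$ is $\sim 2L$, so $b_T\sim 2L$ and $a_T=\sqrt{b_T/(2L)}\to 1$. Thus $a_T$ is eventually bounded below by $1/2$, and the Portmanteau theorem yields, for every $\delta>0$,
\[
\limsup_{T\to\infty} \P\Big(\sup_{0\leq s\leq u}|A_T(s)-A_T(0)| > a_T\delta\Big) \leq \P(Y_u \geq \delta/2).
\]
Almost sure continuity of $A^{\ast}$ at $0$ forces $Y_u\to 0$ almost surely as $u\to 0$, so the right-hand side vanishes; this proves the first claim. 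The second is identical in spirit, with $A_T$ replaced by $C_T$, the interval $[0,u]$ by $[1-u,1]$, and the continuity of $C^{\ast}(s):=\|W(s)\|^2/s$ at $s=1$ (where $C^{\ast}(1)=\|W(1)\|^2$) used in place of the continuity of $A^{\ast}$ at $0$. The only technical subtlety is that each functional limit must be taken on a closed sub-interval avoiding the endpoint at which the quadratic form degenerates, which is exactly what the restrictions $s\leq u<1$ and $s\geq 1-u>0$ ensure.
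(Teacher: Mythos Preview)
Your proof is correct and follows essentially the same approach as the paper: both use $|\sup f-\sup g|\le\sup|f-g|$, rewrite $\Lambda_T(s)-C_T(s)$ as $A_T(s)-A_T(0)$, invoke the functional convergence from Propositions~\ref{prop:rtau} and~\ref{prop:qtau} together with $a_T\to 1$ to obtain the limiting expression $\sup_{0<s\le u}\bigl|\|W(1)-W(s)\|^2/(1-s)-\|W(1)\|^2\bigr|$, and then let $u\to 0$. Your write-up is somewhat more explicit (spelling out the Portmanteau step and the uniform invertibility on $[0,u]$), but the argument is the same.
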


\begin{proof}
It holds that
\begin{eqnarray}
\nonumber && a_T^{-1}\big|\sup_{0< s \leq u} \Lambda_T(s) - \sup_{0< s \leq u}   
R_{sT}^t Q_{sT}^{-1}R_{sT}\big| \\
\nonumber &\leq& \sup_{0< s \leq u}a_T^{-1}\big| \Lambda_T(s) 
- R_{sT}^t Q_{sT}^{-1}R_{sT}  \big|\\
\nonumber   &=& \sup_{0< s \leq u}a_T^{-1}\big|(R_{T} -R_{sT})^t 
(Q_{T}-Q_{sT})^{-1}(R_{T} -R_{sT})   - R_{T}^t Q_{T}^{-1}R_{T}\big| \\
\label{eq:conv-sup-bound-1}  &\xrightarrow{\mathcal{D}} & \sup_{0< s \leq u} 
\bigg| \frac{\|W(1) -W(s) \|^2}{1-s} - \|W(1) \|^2  \bigg| \quad(\textrm{as }
 T\rightarrow\infty)\\
   \nonumber &\rightarrow & 0, \quad\textrm{almost surely},
\end{eqnarray}
as $u\rightarrow 0$,  where the convergence in (\ref{eq:conv-sup-bound-1})
is implicated by the proof of Theorem~\ref{theo-1} and the fact that $a_T\rightarrow 1$. 
Analogously, one has
\begin{eqnarray*}
  &&\hspace{-20mm}  a_T^{-1} \left|\sup_{1-u< s \leq  1} \Lambda_T(s) 
- \sup_{1-u< s \leq 1}   (R_{T} -R_{sT})^t (Q_{T}-Q_{sT})^{-1}
 (R_{T} -R_{sT}) \right|\\[1mm]
  &\leq& \sup_{1-u< s \leq  1}a_T^{-1} \left|R_{sT}^t Q_{sT}^{-1}R_{sT}   
- R_{T}^t Q_{T}^{-1}R_{T}   \right|\\
  &\xrightarrow{\mathcal{D}} & \sup_{1-u< s \leq  1} \left| \frac{\|W(s) \|^2}{s} - \|W(1) \|^2  \right| 
   \quad(\textrm{as }T\rightarrow\infty)\\
   \nonumber &\rightarrow & 0, \quad\textrm{almost surely},
\end{eqnarray*}
as $u\rightarrow 0$.
\end{proof}

\begin{proposition} \label{Prop-Asymp-Gumbel}
Under the framework of Theorem~\ref{theo-1}
it holds under the null hypothesis that
\begin{equation*}
    \frac{1}{a_T}\left(\sup_{0< s \leq u}   R_{sT}^t Q_{sT}^{-1}R_{sT} 
- b_T \right)\stackrel{\mathcal{D}}{\longrightarrow} G^*
\end{equation*}
and
\begin{equation*}
    \frac{1}{a_T}\left(\sup_{1-u< s \leq 1}   (R_{T} -R_{sT})^t (Q_{T}-Q_{sT})^{-1}
(R_{T} -R_{sT})\right)\stackrel{\mathcal{D}}{\longrightarrow} G^*,
\end{equation*}
as $n\rightarrow \infty$, where $G^*$ denotes a real-valued random variable satisfying
\begin{equation*}
    \P(G^*\leq x) = \exp(-e^{-x/2})
\end{equation*}
and where $a_T$ and $b_T$ are given in Proposition~\ref{le:cor-davis}. 
\end{proposition}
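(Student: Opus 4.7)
The plan is to reduce the supremum $\sup_{0 < s \leq u} R_{sT}^t Q_{sT}^{-1} R_{sT}$ to a multivariate Darling--Erd\H{o}s maximum for an iid standard Gaussian random walk, and then to apply Proposition~\ref{le:cor-davis} directly. Taking $\nu = 1$ as in the text, so that $T = n$, I would pass from the continuous-time supremum to a maximum over the integer grid $k \in \{1, \ldots, \lfloor un \rfloor\}$ and, on that grid, simultaneously replace $R_k$ by an iid Gaussian partial sum and $Q_k/k$ by its deterministic limit $\Sigma$.

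First, by Corollary~\ref{Cor-starke-Approximation} there is an iid Gaussian sequence $(\zeta_i)$ with covariance $\Sigma$ such that $U_k := \sum_{i=1}^k \zeta_i$ satisfies $\|R_k - U_k\| = O(k^{1/2-\lambda})$ almost surely; setting $S_k := \Sigma^{-1/2} U_k$, the $(S_k)$ form a $(p+1)$-dimensional standard Gaussian random walk. Second, combining Proposition~\ref{prop:qtau} with Proposition~\ref{prop:old-pap} applied at the index $k$ in place of $T$, I would show that $Q_k/k \to \Sigma$ uniformly for $k$ in the relevant range. Together, these approximations should yield
\begin{equation*}
R_k^t Q_k^{-1} R_k = \frac{\|S_k\|^2}{k} + o(1)
\end{equation*}
uniformly in $k$, which is sufficient since $a_T \to 1$, so that an $o(1)$ error is negligible on the Darling--Erd\H{o}s scale.

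Third, the continuous supremum in $s$ must be replaced by the discrete maximum over $k = \lfloor sT \rfloor$, which requires only a modulus-of-continuity estimate for the stochastic integrals entering $R_{sT}$ and $Q_{sT}$ on sub-intervals of length $1/T$; since the centering $b_T$ grows only like $\log\log T$, a crude such estimate suffices. Proposition~\ref{le:cor-davis} applied to the iid sequence $(\Sigma^{-1/2}\zeta_i)$, whose Gaussian increments have identity covariance and trivially satisfy the moment condition, then delivers the first assertion. The second assertion follows from the same argument applied to the time-reversed pair $(R_T - R_{sT},\, Q_T - Q_{sT})$ on $s \in (1-u, 1]$: by the stationarity of the chopped unit-length pieces of the process, already exploited in Proposition~\ref{Prop-R_k-mixing}, this pair plays the same role on $(1-u, 1]$ as $(R_{sT}, Q_{sT})$ does on $(0, u]$ after the reparametrisation $s \mapsto 1 - s$.

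The main obstacle I anticipate is uniform control of the two approximation steps for small $k$, where $Q_k^{-1}$ is nearly singular and yet the Darling--Erd\H{o}s normalization is precisely calibrated to the small-index tail. I would address this by splitting the supremum at a slowly growing cutoff $K_0 = K_0(T) \to \infty$ with $K_0/T \to 0$: for $k \leq K_0$, a direct tail bound (using, for instance, the law of the iterated logarithm for $U_k$ together with the strong approximation) shows that this part contributes negligibly on the $b_T$ scale; for $K_0 < k \leq \lfloor un \rfloor$, the two reductions above hold uniformly and Proposition~\ref{le:cor-davis} applies.
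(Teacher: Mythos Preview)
Your proposal is correct and follows essentially the same route as the paper: strong approximation of $R_k$ by an iid Gaussian partial sum $U_k$ via Corollary~\ref{Cor-starke-Approximation}, replacement of $Q_{sT}/(sT)$ by $\Sigma$ via Proposition~\ref{prop:qtau}, and reduction to Proposition~\ref{le:cor-davis} applied to $S_k=\Sigma^{-1/2}U_k$. The only cosmetic difference is in the handling of small indices: the paper first observes that $\sup_{s\in(0,u]}R_{sT}^tQ_{sT}^{-1}R_{sT}=\sup_{s\in(0,1]}R_{sT}^tQ_{sT}^{-1}R_{sT}$ with probability tending to one (since the sup over $[u,1]$ is $O_P(1)$ while the sup near $0$ diverges), and then cuts off below at a \emph{fixed} level $M/T$, letting $M\to\infty$ after $T\to\infty$, whereas you use a single growing cutoff $K_0(T)$; both devices serve the same purpose and are interchangeable.
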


\begin{proof}
The reasoning follows the lines of the proof of remark A3 presented in \cite{davis} 
(see page 297). We first note that 
\begin{eqnarray*}
    R_{sT}^t\Gamma_{p+1}^{-1}R_{sT}-U_{[sT]}^t\Gamma_{p+1}^{-1}U_{[sT]} = 
    R_{sT}^t\Gamma_{p+1}^{-1}(R_{sT}-U_{[sT]})+(R_{sT}^t-U_{[sT]}^t)\Gamma_{p+1}^{-1}U_{[sT]} .
\end{eqnarray*}
The law of iterated logarithm implies $ U_{[sT]}^t\Gamma_{p+1}^{-1}=O(([sT]\log [sT])^{1/2})  $ and 
Corollary \ref{Cor-starke-Approximation} then implies 
$ R_{sT}^t\Gamma_{p+1}^{-1}=O(([sT]\log [sT])^{1/2}) $. Using those facts and Corollary 
\ref{Cor-starke-Approximation} again yields 
\begin{eqnarray} \label{Gleich-R_k-U_k-Vergleich}
R_{sT}^t\Gamma_{p+1}^{-1}R_{sT}-U_{[sT]}^t\Gamma_{p+1}^{-1}U_{[sT]} = O([sT]^{1-\lambda'}) 
\end{eqnarray}
for some $ \lambda'>0 $ as $ T\rightarrow\infty $. 
\\
Since by Proposition \ref{prop:qtau} one has 
$ Q_{sT}/sT\rightarrow\Gamma_{p+1} $ it follows that
\begin{eqnarray*}
     R_{sT}^tQ_{sT}^{-1}R_{sT}-\frac{1}{sT}R_{sT}\Gamma_{p+1}^{-1}R_{sT} 
&=&   \frac{R_{sT}^t}{(sT)^{1/2}}sT Q_{sT}^{-1}\Big(\Gamma_{p+1}
   -\frac{Q_{sT}}{sT}\Big)\Gamma_{p+1}^{-1}\frac{R_{sT}}{(sT)^{1/2}}
\longrightarrow0.
\end{eqnarray*}
This relation together with Equation (\ref{Gleich-R_k-U_k-Vergleich}) implies that as 
$ T\rightarrow\infty $ one has
\begin{eqnarray} \label{Gleich-Q_n-Gamma-Vergleich}
  R_{sT}^tQ_{sT}^{-1}R_{sT}-U_{[sT]}^t\Gamma_{p+1}^{-1}U_{[sT]}/[sT]\longrightarrow0.
\end{eqnarray}
Proposition \ref{prop:qtau} and the continuous mapping theorem yield
$$ \sup_{s\in [u,1]}R_{sT}^tQ_{sT}^{-1}R_{sT}\stackrel{\cal D}{\longrightarrow}
    \sup_{s\in[u,1]}\frac{\|W(s)\|^2}{s}  .$$
It thus follows that
$$  \sup_{s\in[u,1]}R_{sT}^tQ_{sT}^{-1}R_{sT}=O_P(1)  $$
Moreover we have
$$   \sup_{s\in(0,u]}R_{sT}^tQ_{sT}^{-1}R_{sT} \stackrel{P}{\longrightarrow}
\infty .$$
Thus with probability closer and closer to one the supremum is achieved in the interval
$ (0,u] $ and not in $ [u,1] $. It then follows that
\begin{eqnarray} \label{Gleich-Sup_ganz-Sup_teil}
  \P\Big(\sup_{s\in(0,1]}R_{sT}^tQ_{sT}^{-1}R_{sT}=
    \sup_{s\in(0,u]}R_{sT}^tQ_{sT}^{-1}R_{sT}\Big)\longrightarrow 1.
\end{eqnarray}
We also note that for a fixed $ M>0 $ one has
\begin{eqnarray} \label{Gleich-Abschneid_Unten}
  \P\Big(\sup_{s\in(0,1]}R_{sT}^tQ_{sT}^{-1}R_{sT}=
    \sup_{s\in(M/T,1]}R_{sT}^tQ_{sT}^{-1}R_{sT}\Big)\longrightarrow 1.
\end{eqnarray}
and
\begin{eqnarray} \label{Gleich-Abschneid_Unten}
  \P\Big(\sup_{s\in(0,1]}U^t_{[sT]}\Gamma_{p+1}^{-1}U_{[sT]}=
    \sup_{s\in(M/T,1]}U^t_{[sT]}\Gamma_{p+1}^{-1}U_{[sT]}\Big)\longrightarrow 1.
\end{eqnarray}
Let 
$$ R_T(M):=\sup_{s\in(M/T,1]}R^t_{sT}Q_{sT}^{-1}R^t_{sT}
                 -\sup_{s\in(M/T,1]}U^t_{[sT]}\Gamma_{p+1}^{-1}U^t_{[sT]}\big/[sT] .$$
From Equation (\ref{Gleich-Q_n-Gamma-Vergleich}) we have
\begin{eqnarray}
  |R_T(M)|&=&   \Big|\sup_{s\in(M/n,1]}R^t_{sT}Q_{sT}^{-1}R^t_{sT}
                 -\sup_{s\in(M/n,1]}U^t_{[sT]}\Gamma_{p+1}^{-1}U^t_{[sT]}\big/[sT]\Big| \\
  \nonumber  &\leq& \sup_{s\in(M/n,1]}
  \Big|R^t_{sT}Q_{sT}^{-1}R^t_{sT}-U^t_{[sT]}\Gamma_{p+1}^{-1}U^t_{[sT]}\big/[sT]\Big|
\end{eqnarray}
which goes to zero as $ M\rightarrow\infty $ uniformly in $ T\geq\nu $.
It now follows from Eq.(\ref{Gleich-Sup_ganz-Sup_teil}) and Proposition \ref{le:cor-davis} that
\begin{eqnarray*}
 &&  \lim_{T\rightarrow\infty}
          \P\Big(\sup_{s\in(0,u]}R^t_{sT}Q^{-1}_{sT}R_{sT}\leq a_Tx+b_T\Big)\\
  &=& \lim_{T\rightarrow\infty}
          \P\Big(\sup_{s\in(0,1]}R^t_{sT}Q^{-1}_{sT}R_{sT}\leq a_Tx+b_T\Big)\\
  &=& \lim_{M\rightarrow\infty}\lim_{T\rightarrow\infty}
          \P\Big(\sup_{s\in(M/T,1]}R^t_{sT}Q^{-1}_{sT}R_{sT}\leq a_Tx+b_T\Big)\\
 &=& \lim_{M\rightarrow\infty}\lim_{T\rightarrow\infty}
        \P\Big(\sup_{s\in(M/T,1]}U^t_{[sT]}\Gamma_{p+1}^{-1}U^t_{[sT]}\big/[sT]  
                 \leq a_Tx+b_T+R_T(M)\Big)\\ 
&=& \lim_{T\rightarrow\infty}
  \P\Big(\sup_{s\in(0,1]}U^t_{[sT]}\Gamma_{p+1}^{-1}U^t_{[sT]}\big/[sT]
           \leq a_Tx+b_T\Big)\\
&=& \lim_{T\rightarrow\infty}
           \P\Big(\sup_{s\in(0,1]}\|S_{[sT]}\|^2/[sT]
                \leq a_Tx+b_T\Big) \longrightarrow \exp\big(-e^{-x/2}\big).
\end{eqnarray*}
This proves the first statement of the proposition. The second one is proved in an analogous way.
\end{proof}

\begin{proof}[Proof of Theorem 2]

Since for fixed $ x\in\mathbb{R} $ one has $ a_Tx+b_T\rightarrow\infty $ as $ T\rightarrow\infty $
it follows from Theorem \ref{theo-1} for all $ u\in(0,1/2) $ that
$$ \P\Big(\sup_{u<s<1-u}\Lambda_T(s)\leq a_Tx+b_T\Big)\longrightarrow 1 .$$
Therefore, one has as $ T\rightarrow\infty $ that
\begin{eqnarray*}
  \P\Big(\sup_{0\leq s\leq 1}\Lambda_T(s)\leq a_Tx+b_T\Big)
 =  \P\Big(\sup_{0\leq s\leq u}\Lambda_T(s)\leq a_Tx+b_T,
     \sup_{1-u\leq s\leq 1}\Lambda_T(s)\leq a_Tx+b_T \Big).
\end{eqnarray*}
By Proposition \ref{Prop-Lambda-Approx} this has for $ T\rightarrow\infty $ the same limit as
$$
  \P\Bigg(\!\sup_{0<s<u}R_{sT}^tQ_{sT}^{-1}R_{sT}\leq a_Tx+b_T, \!
   \sup_{1-u<s<1}(R_{T}-R_{sT})^t(Q_{T}-Q_{sT})^{-1}(R_{T}-R_{sT}) 
  \leq a_Tx+b_T\!\Bigg) .
$$
Proposition \ref{Prop-Asymp-Gumbel} yields that this last expression converges toward 
$ \exp(-2e^{-x/2}) $ since the two sequences
$$ \sup_{0<s<u}R_{sT}^tQ_{sT}^{-1}R_{sT} $$
and  
$$ \sup_{1-u<s<1}(R_{T}-R_{sT})^t(Q_{T}-Q_{sT})^{-1}(R_{T}-R_{sT}) $$
are asymptotically independent by Proposition \ref{Prop-R_k-mixing}.
\end{proof}

{\bf Acknowledgments}  This work was partly supported by the Collaborative Research Pro\-ject SFB 823 (Statistical modelling of nonlinear dynamic processes) of the German Research Foundation DFG. Thomas Kott was supported by the E.ON Ruhrgas AG. The authors wish to thank Martin Wendler for his help with the proof of Propostion 4.2, and two anonymous referees for their 
careful reading of the manuscript and for their comments that helped to improve the paper.

\end{document}